\DeclarePairedDelimiter{\floor}{\lfloor}{\rfloor}
\newtheorem*{remark*}{Remark}
\newtheorem*{notation*}{Notation}
\newtheorem{theorem}{Theorem}[section]
\newtheorem{proposition}[theorem]{Proposition}
\newtheorem{lemma}[theorem]{Lemma}
\newtheorem{claim}[theorem]{Claim}
\newtheorem{observation}[theorem]{Observation}
\newtheorem{conjecture}[theorem]{Conjecture}
\numberwithin{equation}{section}
\Crefname{claim}{Claim}{Claims}
\def \G {\Gamma}
\def \F {\mathcal{F}}
\def \R {\mathbb{R}}
\def \a {\alpha}
\def \b {\beta}
\def \e {\varepsilon}
\def \d {\delta}
\def \D {\Delta}
\def \a {\alpha}
\def \t {\tau}
\def \T {\theta}
\def \g {\gamma}
\def \l {\lambda}
\def \h {\eta}
\def \E {\mathbb{E}}
\def \P {\mathbb{P}}
\DeclareMathOperator{\dist}{dist}
\newcounter{propcounter}
\title{Large rainbow matchings in edge-colored graphs}
\author[1]{Debsoumya Chakraborti\thanks{Supported by the Royal Society and the Institute for Basic Science (IBS-R029-C1).}} 
\author[2]{Po-Shen Loh\thanks{Supported in part by National Science Foundation CAREER Grant DMS-1455125.}}
\affil[1]{\small Department of Mathematics, University College London (UCL), London,~UK}
\affil[2]{\small Department of Mathematical Sciences, Carnegie Mellon University, Pittsburgh,~USA}
\affil[ ]{\small Email:
\texttt{d.chakraborti@ucl.ac.uk},
\texttt{ploh@cmu.edu} }
\begin{document}
\setstcolor{red}
\maketitle
\begin{abstract}
A subgraph of an edge-colored graph is called \emph{rainbow} if all of its edges have distinct colors. There has been much research on the topic of finding a large rainbow matching in a properly edge-colored graph, where a proper edge-coloring is a coloring of the edge set such that no same-colored edges are incident. Gao, Ramadurai, Wanless, and Wormald proved that in every proper edge-coloring of a graph with $n$ colors where each color appears at least $n+o(n)$ times, there is always a rainbow matching using every color. We strengthen this result by simultaneously relaxing three conditions: (i) we lift the condition on the number of colors and allow any finite number of colors and instead, put a weaker condition requiring the maximum degree of the graph to be at most $n$, (ii) we relax the proper coloring condition and require that the graph induced by each of the colors have maximum degree $o(n)$, and (iii) we work in a more general setting of multigraphs allowing edge multiplicities to be $o(n)$. 

As an application of this result, we show that for every proper edge-coloring of a graph with $2n+o(n)$ colors where each color appears at least $n$ times, there is always a rainbow matching of size $n$. Aharoni and Berger conjectured that $2n+o(n)$ can be replaced by $n+1$ in this statement. We dispute this conjecture with an explicit construction.
\end{abstract}

\section{Introduction}

\subsection{State of the art}
Transversals in Latin squares have been a central topic of study in combinatorics, dating back to the work of Euler \cite{Euler} in the 18th century, who studied conditions under which Latin squares can be decomposed into transversals. For a survey of transversals in Latin squares, see, e.g., \cite{W}. One of the central and long-standing conjectures in this field is the following, which is attributed to Brualdi, Ryser, and Stein.

\begin{conjecture} [Brualdi and Ryser \cite{BR}, Ryser \cite{Ryser}, and Stein \cite{S}] \label{brs}
Every $n \times n$ Latin square has a partial transversal of size $n-1$ if $n$ is even and of size $n$ if $n$ is odd.
\end{conjecture}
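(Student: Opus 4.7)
The plan is to encode the Latin square as a proper edge coloring of $K_{n,n}$, with rows and columns as the two parts and cell $(i,j)$ the edge between row $i$ and column $j$ colored by $L(i,j)$; a partial transversal of size $k$ is then exactly a rainbow matching of size $k$. In this model the maximum degree is $q := n$, the coloring is proper, and each of the $n$ color classes is a perfect matching of size $n$. The main theorem of this paper therefore \emph{almost} applies, the only issue being that its hypothesis requires each color to appear at least $q + o(q)$ times while here each color appears exactly $q$ times.

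To buy the required slack, I would delete a set $S$ of $s$ colors and all their edges, where $s = o(n)$ is chosen large enough to cover the paper's $o(q)$ gap. Since each deleted color class is a perfect matching, the maximum degree of the remaining graph drops to $q' := n - s$, while every surviving color still spans $n = q' + s$ edges. The main theorem then produces a rainbow matching using every surviving color, giving size $n - s = n - o(n)$, which is the asymptotic Brualdi--Ryser--Stein result advertised in the abstract.

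The hard part is shrinking this $o(n)$ deficit down to exactly $1$, which is the full content of the conjecture and has remained open for over half a century. The main theorem cannot accomplish this on its own because its probabilistic step intrinsically loses $\omega(1)$ colors. To go further I would attempt an absorption strategy: set aside a small ``reservoir'' of cells at the outset, apply the main theorem to the complement to produce a near-perfect rainbow matching, and then augment by row/column/symbol switching chains confined to the reservoir. The dominant obstacle will be designing an absorber that is simultaneously rainbow-flexible enough to close an arbitrary constant deficit and compatible with the rigid Latin-square constraint that every row, column, and symbol appear exactly $n$ times; without leveraging some additional algebraic structure, this is the genuine barrier that has prevented every previous rainbow-matching attack from reaching the exact bound, and I would not expect the present framework alone to overcome it.
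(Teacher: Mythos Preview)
The statement you were asked to prove is Conjecture~\ref{brs}, which the paper does \emph{not} prove: it is explicitly presented as a long-standing open problem, and the paper only establishes related asymptotic results (Theorem~\ref{kpsy} is quoted from \cite{KPSY}, and Corollary~\ref{cor1} is the paper's own contribution in this direction). There is therefore no ``paper's own proof'' to compare against.

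Your proposal is honest about this: you correctly observe that the full conjecture is out of reach, and what you actually carry out is an asymptotic argument. That argument---delete $s$ color classes to drop the maximum degree to $q'=n-s$ and then invoke Theorem~\ref{th1} with $\Delta=1$---is valid and yields a rainbow matching of size $n-s$ with $s$ on the order of $n^{5/6}(\log n)^2$. However, this bound is far weaker than what is already recorded in the paper as Theorem~\ref{kpsy}, namely a partial transversal of size $n-O(\log n/\log\log n)$; so even as an asymptotic statement your route is not competitive. The paper's own asymptotic contribution, Corollary~\ref{cor1}, goes in a different direction (allowing $K_{q,q+o(q)}$ rather than $K_{n,n}$) and is obtained from Theorem~\ref{th3}, not Theorem~\ref{th1}.

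The absorption sketch in your final paragraph is speculation, not a proof, and you yourself identify it as the barrier. In summary: your proposal does not prove the conjecture, the paper does not prove the conjecture, and the partial result your method delivers is already superseded by results the paper cites.
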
 

This conjecture translates into the graph-theoretic statement: ``In every proper edge-coloring of the complete bipartite graph $K_{n,n}$ with $n$ colors, there is a rainbow matching of size $n-1$ if $n$ is even and of size $n$ if $n$ is odd''. To see this connection, refer to \cite{P} or \cite{W}. Brouwer, De Vries, and Wieringa \cite{BVW} and independently Woolbright \cite{Woolbright} proved an asymptotic version of \Cref{brs}. The error term was later improved by Hatami and Shor \cite{HS}, and subsequently by Keevash, Pokrovskiy, Sudakov, and Yepremyan \cite{KPSY}. Recently, Montgomery \cite{M} settled this conjecture for all sufficiently large even $n$.

In this paper, we study various generalizations of \Cref{brs} in the setting of rainbow matchings in edge-colored graphs. We advise the interested readers to see \cite{ABriggs} for a recent survey on various extensions of this conjecture. Aharoni and Berger \cite{AB} conjectured the following generalization of \Cref{brs}. Throughout this paper, multigraphs permit parallel edges but not loops. A \emph{proper edge-coloring} of a multigraph refers to a coloring of the edge set such that every pair of incident edges receives distinct colors.

\begin{conjecture}[Aharoni and Berger \cite{AB}] \label{ab}
Let $G$ be a properly edge-colored bipartite multigraph with $n$ colors, having at least $n+1$ edges of each color. Then $G$ has a rainbow matching using every color.
\end{conjecture}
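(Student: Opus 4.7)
The plan is to attempt this via a maximum-rainbow-matching-plus-switching argument. Let $G$ be a properly edge-colored bipartite multigraph with $n$ colors, each having at least $n+1$ edges, and let $M$ be a maximum rainbow matching. If $|M|=n$ we are done (every color is then used), so assume some color $c$ is missing from $M$. Maximality forces every color-$c$ edge to meet $V(M)$: otherwise, a color-$c$ edge disjoint from $V(M)$ could be added to $M$. Since color $c$ is itself a matching of size $\ge n+1$, this gives $|M|\ge(n+1)/2$, but we need all the way to $|M|=n$.

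The switching step is as follows. For a color-$c$ edge $uv$ with, say, $u\in V(M)$, let $f_u=uw\in M$ have color $c_1$; the goal is to remove $f_u$, add $uv$, and then reinsert a color-$c_1$ edge using $w$ together with some vertex outside $V(M)$. When both $u,v\in V(M)$, we also delete $f_v=vy$ of color $c_2$ and must reinsert both colors. The slack for these reinsertions is supplied by the $+1$ surplus in each color class, and one hopes that averaging over the $\ge n+1$ choices of color-$c$ edge yields at least one feasible reconstruction, producing a rainbow matching of size $|M|+1$ and contradicting maximality.

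The main obstacle is that swaps interact globally: repairing one color can destroy the slack available to another, and a naive iteration can cycle. To control this I would pass to the topological framework of independence complexes of rainbow matchings and invoke a Hall-type topological connectivity theorem in the style of Aharoni--Berger--Ziv. The core technical claim becomes a lower bound on the connectivity of this complex, which reduces to showing that after deleting any small vertex set every color still contains an edge---precisely where the $+1$ margin is supposed to pay off.

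If this plan stalls, a natural fallback is to test whether the conjecture is actually true. The $+1$ margin is razor-thin, and a promising target is to concentrate nearly every edge of every color through a small common set of vertices, producing a bottleneck that caps the attainable rainbow matching below $n$. Finding such a construction, even for small $n$, would disprove the conjecture outright, and this possibility is worth examining in parallel with the proof attempt before committing to heavy topological machinery.
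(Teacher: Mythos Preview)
The statement you are trying to prove is Conjecture~\ref{ab}, which the paper presents as an \emph{open conjecture} of Aharoni and Berger, not as a theorem. The paper does not contain a proof of it; it only remarks that an asymptotic version (with $n+o(n)$ edges per color instead of $n+1$) has been established elsewhere, and then moves on to prove related asymptotic results such as Theorem~\ref{th1}. So there is no ``paper's own proof'' to compare against.

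Your proposal is not a proof but a research plan, and you acknowledge as much: the switching argument stalls because repairs interact globally, and the topological-connectivity route is left at the level of ``the core technical claim becomes a lower bound on the connectivity,'' with no argument supplied for that bound. The fallback paragraph about searching for a counterexample confirms that you are treating the statement as open. That is the correct posture: Conjecture~\ref{ab} is, to the best of current knowledge, unresolved, and neither the elementary augmenting-path idea nor the topological Hall-type machinery you sketch is known to close the gap from $n+o(n)$ down to $n+1$. If you want to engage with what the paper actually proves, the relevant target is Theorem~\ref{th1} (or Theorems~\ref{th} and~\ref{th3}), whose proofs proceed by a randomized nibble algorithm analyzed via differential equations and concentration inequalities, not by augmenting or topological methods.
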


An easy greedy argument shows that having $2n-1$ edges in each color suffices in the above conjecture. After subsequent efforts by several authors (see, e.g., \cite{ACH,AKZ',CE,HJ,KZ,P'}), an asymptotic version of \Cref{ab} is established in \cite{CPS,P}, where the conclusion of the conjecture is shown to hold if there are at least $n+o(n)$ edges of each color. It is very natural to generalize \Cref{ab} for non-bipartite graphs. Gao, Ramadurai, Wanless, and Wormald \cite{GRWW} mentioned the following conjecture, which was also suggested by Aharoni, Berger, Chudnovsky, Howard, and Seymour \cite{ABCHS}. 

\begin{conjecture} [Gao, Ramadurai, Wanless, and Wormald \cite{GRWW}] \label{abc}
Let $G$ be a properly edge-colored multigraph with $n$ colors, having at least $n+2$ edges of each color. Then $G$ has a rainbow matching using every color.
\end{conjecture}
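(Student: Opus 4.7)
I would attack Conjecture~\ref{abc} by a switching argument on an extremal counterexample. Suppose for contradiction that $G$ is a properly edge-colored multigraph with color classes $C_1,\ldots,C_n$, each of size at least $n+2$, yet $G$ admits no rainbow matching using every color. Let $M$ be a maximum rainbow matching and let $c$ be a color missing from $M$, so $|M|\le n-1$ and $|V(M)|\le 2(n-1)$. Maximality of $M$ forces every edge of $C_c$ to touch $V(M)$: otherwise we could simply add it to $M$. Since $C_c$ is itself a matching of size at least $n+2$, its edges cover many distinct vertices of $V(M)$, and by pigeonhole several edges of $C_c$ are forced to attach to the same matching edge of $M$.

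The main step is to exploit these $n+2$ edges of $C_c$ via systematic exchanges. For each edge $f=uv\in C_c$ with $u\in V(M)$, consider the rainbow matching $M'=(M\setminus\{e_u\})\cup\{f\}$, where $e_u\in M$ is the edge at $u$; to extend $M'$ back to a rainbow matching using all colors it suffices to find, for the color of $e_u$, some replacement edge disjoint from $V(M')$. A clean way to organize this is an auxiliary bipartite ``exchange graph'' pairing the $\ge n+2$ edges of $C_c$ with the $\le n-1$ colors used in $M$, with an edge encoding each feasible single- or multi-step swap. A double count of this auxiliary graph, combined with the hypothesis that every other color class also has at least $n+2$ edges, should eventually produce either a direct augmentation or an iterated chain of replacements yielding a larger rainbow matching, contradicting the maximality of $M$.

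The principal obstacle is that the slack of $+2$ is extraordinarily tight. Existing approaches, including the random-greedy and nibble-style arguments of Gao, Ramadurai, Wanless, and Wormald~\cite{GRWW} and the techniques developed in the present paper, only establish conclusions of the form ``$n+o(n)$ edges per color suffice.'' Closing the remaining $o(n)$ gap down to an absolute constant of $2$ would plausibly require either (i) a stability theorem identifying all near-extremal configurations, likely small multigraph gadgets tied to the sharpness examples for Conjecture~\ref{ab}, followed by ad hoc treatment of each, or (ii) an algebraic or topological invariant in the spirit of recent progress on Conjecture~\ref{brs}. I expect the multigraph aspect --- which allows many colors to be packed on the same pair of vertices and thereby defeats clean structural dichotomies --- to be the main source of difficulty, and the reason the conjecture remains open despite the availability of strong asymptotic tools.
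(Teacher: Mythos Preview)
The statement you were given is Conjecture~\ref{abc}, not a theorem: the paper does not prove it, and indeed explicitly notes that only an asymptotic version (each color class having at least $n+o(n)$ edges) has been established, by Correia, Pokrovskiy, and Sudakov~\cite{CPS}. There is therefore no proof in the paper to compare your attempt against.

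Your proposal is not a proof either, and you acknowledge as much in its final paragraph. The switching/augmentation outline in the first two paragraphs is the standard starting point, but the step ``a double count of this auxiliary graph \ldots\ should eventually produce either a direct augmentation or an iterated chain of replacements'' is precisely where the entire difficulty of the conjecture lies, and nothing you have written indicates how to carry it out. In particular, the pigeonhole observation that several $C_c$-edges attach to the same matching edge is already available when each color class has size $n+1$ (and even $n$), yet the conjecture is known to be false at $n+1$; so that observation alone cannot be the engine of the argument. Your closing diagnosis --- that closing the gap from $n+o(n)$ to $n+2$ would likely require stability or algebraic/topological input, and that the multigraph setting is the main obstruction --- is reasonable and consistent with the paper's discussion, but it confirms that what you have submitted is a research plan rather than a proof.
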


We remark here that one cannot replace $n+2$ by $n+1$ in \Cref{abc} (to see this, refer to \cite{CPS,GRWW}). After a couple of notable works progressing towards \Cref{abc} by Gao, Ramadurai, Wanless, and Wormald \cite{GRWW}, and by Keevash and Yepremyan \cite{KY}, recently Correia, Pokrovskiy, and Sudakov \cite{CPS} established an asymptotic version of \Cref{abc}. Gao, Ramadurai, Wanless, and Wormald proved the following asymptotic version of \Cref{abc} for simple graphs.

\begin{theorem}[Gao, Ramadurai, Wanless, and Wormald \cite{GRWW}] \label{gao}
There exist $n_0, \xi > 0$ such that whenever $n\ge n_0$, for every graph $G$ that is properly edge-colored with $n$ colors such that there are at least $(1+n^{-\xi})n$ edges of each color, there is a rainbow matching of $G$ using every color. 
\end{theorem}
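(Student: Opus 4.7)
My plan is a deterministic switching (``rotation'') argument. Take a rainbow matching $M$ of $G$ using the maximum possible number of colors; if $M$ uses all $n$ colors we are done, so suppose $M$ misses a color $c_0$ and aim for a contradiction.

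The initial observation is that a proper edge coloring forces each color class to be a matching, so the $\ge(1+\epsilon)n$ edges of color $c_0$ form a matching. Since $|M|\le n-1$, every such edge must meet $V(M)$ (otherwise we could append it to $M$, contradicting maximality). Writing $a$ and $b$ for the number of $c_0$-edges with exactly one and with two endpoints in $V(M)$, the constraints $a+b\ge(1+\epsilon)n$ and $a+2b\le|V(M)|\le 2(n-1)$ give $a\ge 2\epsilon n+2$. Each such edge $uv$ with $u\notin V(M)$ yields a \emph{single swap}: delete the unique $M$-edge through $v$ and insert $uv$, producing a rainbow matching of the same size that now misses the color of the deleted edge.

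I would iterate this to build a rooted tree whose root represents $M$ missing $c_0$ and whose children of a node (representing some $M_k$ missing $c_k$) are all single swaps applicable to $(M_k,c_k)$. A node is a \emph{good leaf} when some edge of color $c_k$ has \emph{both} endpoints outside $V(M_k)$; appending that edge to $M_k$ produces a rainbow matching of size $|M|+1$, contradicting maximality. Since the branching at every internal node is $\ge 2\epsilon n - O(k)$, for large $n$ the tree has at least $(\epsilon n)^k$ nodes at depth $k$. A pigeonhole/volume argument comparing this exponential growth with a polynomial-in-$n$ bound on the number of distinct matchings reachable in $k$ swaps then forces, for $k=k(\epsilon)$ and $n\ge N(\epsilon)$, either a good leaf or a branch collision that can be rotated into an alternating cycle producing the desired augmentation. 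This is precisely where the hypothesis $(1+\epsilon)n$ — as opposed to $n+O(1)$ — is essential: it makes the branching factor \emph{linear} in $n$, so the tree outgrows its state space within $O(1/\epsilon)$ levels.

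\medskip

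\noindent\textbf{Main obstacle.} The main difficulty is bookkeeping within the swap tree: branches can revisit vertices or colors, and single swaps may reintroduce into $V(M_k)$ the very vertex that an earlier swap had freed. One has to design the tree with a pruning rule so that each collision either is converted into an alternating cycle whose rotation provides an augmenting edge, or is discarded without harming the linear branching estimate $a\ge 2\epsilon n - O(k)$. Making the error terms and the state-counting argument close uniformly with a single threshold $N(\epsilon)$ is the delicate quantitative part, and is where I expect most of the technical work to lie.
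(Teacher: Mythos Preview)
Your approach is entirely different from the paper's. The paper does not prove Theorem~\ref{gao} directly; it is recovered as the $\Delta=1$ case of Theorem~\ref{th1}, which is established by a probabilistic nibble. A randomized algorithm activates a small fraction of the colors in each round and selects a random edge from each activated class; the trajectories of the maximum degree and of the color-class sizes are tracked by a pair of differential equations, with concentration supplied by Talagrand's inequality and the Lov\'asz local lemma (Lemmas~\ref{lem2}--\ref{locallemma}). After $\eta/\delta$ rounds every surviving color class is still large relative to the maximum degree, and the matching is completed greedily via Proposition~\ref{useAlon}. No switching or augmenting-tree structure appears anywhere.

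Your opening count is correct and does give branching $\ge 2\epsilon n+2$ at every node of the swap tree. The gap is the closing step, and it is structural rather than mere bookkeeping. You assert a ``polynomial-in-$n$ bound on the number of distinct matchings reachable in $k$ swaps'', but specifying such a state already requires choosing which $\le k$ edges of $M$ to remove and, for each vacated color, a replacement edge, so the state count is itself of order $n^{\Theta(k)}$. For small $\epsilon$ one has $(2\epsilon n)^k \ll n^{\Theta(k)}$ at every depth, and counting alone forces no collision. Nor do you say how a collision would help: if two swap sequences reach the same $(M',c')$, the symmetric difference of the two paths is a closed walk along which matching and non-matching edges carry the \emph{same} multiset of colors, so rotating along it produces another rainbow matching of the same size, not a larger one. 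Converting such a structure into a genuine augmentation is precisely the hard part of rotation arguments for rainbow matchings; the known deterministic proofs in the bipartite setting (e.g.~\cite{P}) need substantially more machinery than a pigeonhole on tree size, and the general simple-graph case of Theorem~\ref{gao} was first handled in~\cite{GRWW} via the semi-random method rather than by switching.
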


We remark here that in \cite{GRWW}, the above theorem is shown to be true if the number of colors is at most $n^{1+\d}$ with $0 \le \d < 1/4$. One of the main contributions of this paper is to replace this condition on the number of colors with a weaker maximum-degree condition in the above result (see \Cref{th1} in the next subsection). We further remark that in \cite{GRWW}, a version of \Cref{gao} was proved for multigraphs with bounded edge multiplicities. Our \Cref{th1} assumes a more general edge multiplicity condition compared to that, and also allows non-proper edge-colorings. Yet another motivation for such general \Cref{th1} comes from the following conjecture by Aharoni and Berger \cite{AB}, which allows the edge-coloring to be non-proper.

\begin{conjecture} [Aharoni and Berger \cite{AB}] \label{a}
Let G be an edge-colored bipartite multigraph with maximum degree $n$. If every color appears on at least $n + 1$ edges, then $G$ has a rainbow matching using every color.
\end{conjecture}

Gao, Ramadurai, Wanless, and Wormald refuted \Cref{a} in \cite{GRWW} by constructing an example where there are color classes (color class refers to the subgraph formed by the edges of that color) with maximum degree linear in $n$. In this context, it is interesting to ask if such a statement holds for edge-coloring with the additional assumption of bounded maximum degree in each color class. We answer this positively with our generalization (\Cref{th1}) of \Cref{gao} that implies an asymptotic version of \Cref{a} for edge-coloring of not-necessarily-bipartite multigraphs with only a maximum degree $o(n)$ assumption on each color class and an edge multiplicity $o(n)$ assumption on $G$. This is also a generalization of \Cref{ab} because for any proper edge-coloring of a multigraph, the maximum degrees of all color classes are one. 

We now move on to a related but slightly different problem. All the problems discussed so far focused on the minimum number of edges in each color class to ensure a rainbow matching using all the colors. Alternatively, we can insist that each color class has exactly $n$ edges and ask how many colors are needed to ensure a rainbow matching of size $n$. In this context, the first result appeared by Drisko \cite{D} in 1998, which some authors later revisited (see, e.g., \cite{AB,ABKZ,AKZ}). On this topic, Bar\'at, Gy\'arf\'as, and S\'ark\"ozy \cite{BGS} suggested the following conjecture.

\begin{conjecture} [Bar\'at, Gy\'arf\'as, and S\'ark\"ozy \cite{BGS}] \label{bgs}
Let $G$ be a properly edge-colored multigraph with $2n-t_n$ colors and exactly $n$ edges of each color, where $t_n = 0$ for even $n$ and $t_n = 1$ for odd $n$. Then $G$ has a rainbow matching using $n$ colors.
\end{conjecture}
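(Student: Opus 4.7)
The plan is to upgrade the asymptotic statement implicit in the paper's Theorem~\ref{th1} to the exact bound of Conjecture~\ref{bgs} by an extremal analysis around rainbow augmenting walks. Suppose for contradiction that $G$ is a properly edge-colored multigraph with $2n - t_n$ colors, each color class a matching of exactly $n$ edges, with no rainbow matching of size~$n$. Fix a maximum rainbow matching $M$ with $|M| = m \le n - 1$, and split the color set as $C_M \sqcup C_U$ with $|C_M| = m$ and $|C_U| = 2n - t_n - m \ge n + 1 - t_n$. By maximality of~$M$, every edge of every $c \in C_U$ must meet $V(M)$; otherwise it would extend $M$.

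To find an augmentation, I would define a color-swap digraph $D$ on vertex set $C_M \cup C_U \cup \{\infty\}$: insert arc $c \to c'$ whenever there is a non-$M$ edge of color $c$ sharing an endpoint in $V(M)$ with the $M$-edge of color $c'$, and arc $c \to \infty$ whenever color $c$ has an edge with an endpoint outside $V(M)$. A rainbow walk in $D$ from a source in $C_U$ to $\infty$, chosen so that the corresponding non-$M$ edges form a matching among themselves, corresponds via standard sequential rotations to an augmentation of $M$ to size $m+1$, contradicting maximality. The substantive task is then to force the existence of such a walk given $|C_U| \ge n + 1 - t_n$ sources.

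For this I would run a density--connectivity dichotomy on~$D$. The density input is that each $c \in C_U$ has out-degree at least $n$ towards $C_M \cup \{\infty\}$ (one contribution per edge of color~$c$), while $|C_M| = m \le n - 1$; hence arcs from $C_U$ must either overflow to $\infty$ (immediate augmentation) or create many overlapping cycles on $C_M$. When the cycle structure is rich, I would apply Theorem~\ref{th1} to the sub-multigraph formed by the colors involved, restricted to a bounded-degree subgraph supported on $V(M)$ and its neighborhood, to extract a rainbow matching of size~$n$ directly. When the cycle structure is too thin to allow this, the blocking pattern must match a \emph{Drisko-type configuration} involving exactly $2m$ colors between $V(M)$ and $V(G) \setminus V(M)$; but the $\ge n + 1 - t_n > 2m$ available colors in $C_U$ then exceed what this extremal structure can accommodate, producing at least one color whose edges escape the Drisko pattern and deliver the desired augmentation.

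The main obstacle is the \emph{rigid (Drisko-like) case}, which sits precisely at the tight threshold: with $2n - t_n - 1$ colors the conclusion can fail, so the proof must use the extra color decisively. Showing that the only blocking configurations at this threshold are (generalized) Drisko constructions on multigraphs, and that these already admit a rainbow matching of size~$n$ by direct combinatorial inspection, is the heart of the problem; the existing literature succeeds only in the asymptotic regime, and closing the residual $o(n)$ gap seems to require a new structural lemma characterizing extremal rainbow configurations on multigraphs, likely inspired by the Aharoni--Berger topological framework. A secondary difficulty is inherent to the multigraph setting: parallel edges permit several unused colors to concentrate on the same vertex pair of $V(M)$, collapsing many arcs in~$D$ and forcing a separate case analysis on the multiplicity profile of colors incident to~$V(M)$ before the density step becomes effective.
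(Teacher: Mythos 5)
The statement you are addressing is stated in the paper only as a conjecture (Conjecture \ref{bgs}); the paper does not prove it and does not claim to. What the paper establishes is an asymptotic version restricted to simple graphs (Theorem \ref{th}): $2(1+\epsilon)q$ colors, each with at least $q$ edges, force a rainbow matching of size $q$, proved via a semi-random (nibble/differential-equation) algorithm together with Theorem \ref{th3}, not via augmenting rotations. Your proposal, even on its own terms, is a plan rather than a proof, and it says so explicitly: the ``heart of the problem'' in your last paragraph --- a structural lemma asserting that the only blocking configurations at the exact threshold $2n-t_n$ are generalized Drisko configurations, which then admit the matching by inspection --- is precisely the open content of the conjecture and is nowhere established or even precisely formulated.

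Beyond that global gap, several concrete steps fail. First, the appeal to Theorem \ref{th1} in the ``rich cycle structure'' case is not available: Theorem \ref{th1} requires every color class to have at least $\bigl(1+(\D^2/q)^{1/6}(\log q)^2\bigr)q$ edges where $q$ bounds the maximum degree of the host graph, whereas here each color has exactly $n$ edges while the maximum degree may be as large as $2n-t_n$; indeed Proposition \ref{th2} shows that in this regime one cannot expect a rainbow matching using every color, so no restriction to a subgraph ``supported on $V(M)$ and its neighborhood'' can simply hand the problem to Theorem \ref{th1}. Moreover Theorem \ref{th1} is proved for simple graphs, and the paper explicitly remarks that extending it to multigraphs of unbounded multiplicity is difficult, while Conjecture \ref{bgs} is genuinely a multigraph statement. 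Second, the closing count ``$|C_U|\ge n+1-t_n>2m$'' is false exactly where it is needed: a maximum rainbow matching can have $m$ as large as $n-1$, so $2m$ can be about $2n-2$, far exceeding $n+1-t_n$; your density argument therefore gives nothing in the near-maximum case, which is the only case that matters. Third, the assertion that a rainbow walk in the swap digraph converts to an augmentation ``via standard sequential rotations'' needs real justification in the colored setting, since successive rotations can clash in colors; arguments of this rotation type are what yield the known bound of roughly $3n-2$ colors (Aharoni, Berger, Chudnovsky, Howard, and Seymour \cite{ABCHS}), and improving them to $2n-t_n$ is exactly what is not known. In short, the proposal neither matches the paper (which proves only the asymptotic simple-graph relaxation, by entirely different probabilistic means) nor constitutes a proof of the conjecture.
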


For the best-known results on this conjecture, the readers are advised to refer to \cite{ABCHS,ABKK}. Aharoni and Berger considered \Cref{bgs} in a simple graph, and made the following general conjecture (Conjecture~3.2 in \cite{AB}) for hypergraphs.

\begin{conjecture} [Aharoni and Berger \cite{AB}] \label{aharoni}
Let $s\le t$ and $G$ be a properly edge-colored $r$-uniform hypergraph with $2^{r-2}(s-1) + 2$ colors and $t$ edges of each color. Then, $G$ has a matching with $t$ edges on which at least $s$ colors appear.
\end{conjecture}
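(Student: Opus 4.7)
The plan is to attack the conjecture by induction on the uniformity $r$, with a direct argument at the base case $r=2$ and a link-reduction argument for the inductive step. The factor $2^{r-2}$ in the color count strongly hints that each step from $r-1$ to $r$ should roughly double the number of colors required, in the spirit of the Drisko-type doubling bounds that appear throughout bipartite matching theory.

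For the base case $r=2$, one is given a properly edge-colored simple graph with $s+1$ color classes, each of size $t$, and must extract a matching of size $t$ on which at least $s$ colors appear. My first attempt would be a greedy-then-switching argument. Take a rainbow matching $M$ of maximum size $k$ using a color set $S$. If $k \ge s$, it should be straightforward to extend $M$ to $t$ edges by reusing colors of $S$, since each color class has $t$ edges but only $O(k)$ of them are blocked by $M$. If $k < s$, pick a color $c \notin S$; its $t$ edges have at most $2k$ endpoints in $V(M)$, so as soon as $t > 2k$ we can augment $M$ by an edge of color $c$. The delicate regime $t \le 2k < 2s$ is the heart of the matter; here I would deploy an alternating-path switching argument in the spirit of Pokrovskiy, trading a color of $S$ for $c$ without shrinking the matching.

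For the inductive step $r-1 \to r$, fix a vertex $v$ and consider its link $L_v$, an $(r-1)$-uniform hypergraph whose edges inherit their colors from $G$. A random choice of $v$ should make every color class contribute a roughly proportional number of edges to $L_v$; applying the inductive hypothesis to $L_v$ with the $2^{r-3}(s-1) + 2$ color threshold and then combining a resulting rainbow matching in $L_v$ with the vertex $v$ would yield the desired matching in $G$.

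The main obstacle, and the reason I expect this plan to stall already at the base case, is that the conjectured threshold is exceptionally tight. The abstract of this very paper announces a construction showing that $q+1$ colors do not suffice for the simple-graph analogue (the case $s=t=q$ of Conjecture \ref{aharoni} at $r=2$), so the switching step above is almost certainly defeated by the same kind of parity or blocking gadget. Even setting that aside, the link reduction loses uniformity of the color counts---a typical vertex lies in very different numbers of edges of different colors---and restoring uniformity requires sharp concentration (Kim--Vu or martingale-type) that is not routine for $r$-uniform hypergraphs. A realistic fallback would be to prove an asymptotic or weakened version of the conjecture rather than the sharp bound.
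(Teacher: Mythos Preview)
The statement you are trying to prove is \emph{false}, and the paper's treatment of it is not a proof but a disproof. Proposition~\ref{th2} exhibits, for every even $t$, a properly edge-colored simple graph with $t+1$ colors and $t$ edges of each color that has no rainbow matching of size $t$. Taking $r=2$ and $s=t$ in Conjecture~\ref{aharoni}, the threshold is $2^{0}(t-1)+2=t+1$ colors and the conclusion asks for a matching of $t$ edges on which at least $t$ colors appear, i.e.\ a rainbow matching of size $t$; so this construction is precisely a counterexample to the conjecture at its base case. Your proposal therefore cannot be repaired: the very regime you flag as ``delicate'' in the base case (where $t\le 2k<2s$ and the switching argument must kick in) is not merely delicate but impossible, because the conclusion you are aiming for is false there.

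To your credit, you essentially anticipate this in your final paragraph, citing the abstract's announcement of a counterexample and predicting that your switching step will be defeated by a parity gadget. That prediction is exactly right: the construction in Section~2 is $K_{t,t}$ with the cyclic $\mathbb{Z}_t$-coloring plus one extra color class consisting of two perfect matchings inside the parts, and the obstruction to a full rainbow matching is a parity (sum-in-$\mathbb{Z}_t$) argument. Given that, the honest conclusion of your write-up should not be a proof plan at all but the observation that the conjecture fails at $r=2$, $s=t$, together with the construction. Your suggested fallback of proving an asymptotic or weakened version is in fact what the paper does (Theorem~\ref{th} shows $2t+o(t)$ colors suffice), but that is a separate result, not a proof of Conjecture~\ref{aharoni}.
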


Aharoni and Berger \cite{AB} mentioned that the $r=2$ and $s=t$ case of \Cref{aharoni} gives us a generalization of \Cref{brs}. However, we disprove this conjecture in this case (see \Cref{th2} below) and make progress toward an upper bound on the required number of colors, which we discuss in the next subsection. To be more precise, this upper bound is obtained by establishing an asymptotic version of \Cref{bgs} when we restrict ourselves to simple graphs $G$ (see \Cref{th} below).

%%%%%%%%%%%%%%%%%%%%%%%%%%%%%%%%%%%%%%%%%%%%%%%%%%%%
\subsection{Main results}
We start with our first result, which strengthens \Cref{gao}.

\begin{theorem} \label{th1}
There exist $\e_0, \xi > 0$ such that the following hold whenever $0 < \e \le \e_0$. Suppose $G$ is an edge-colored multigraph with maximum degree at most $n$ such that there are at least $(1 + \e^{\xi})n$ edges of each color. Furthermore, suppose that the multiplicity of $G$ is at most $\e n$ and at most $\e n$ edges of the same color are incident to any vertex. Then there is a rainbow matching in $G$ that uses every color.
\end{theorem}

One can recover \Cref{gao} from above by taking $\e = 1/n$. Moreover, in contrast to \Cref{gao}, we do not need any bound on the number of colors; instead, we require a weaker condition that the maximum degree is at most $n$. (Note that in \Cref{gao}, a proper edge-coloring of $G$ with $n$ colors ensures the maximum degree to be at most $n$.) Furthermore, \Cref{th1} does not require the edge-coloring to be proper, but the subgraph formed by each color still must have maximum degree $o(n)$. To prove \Cref{th1}, we utilize the `nibble' technique (introduced in R\"odl's pioneering work \cite{R} and developed by various authors) combined with the local lemma. For a single `nibble' step, we use a natural randomized algorithm similar to the one used by Gao, Ramadurai, Wanless, and Wormald \cite{GRWW}. 

We next state our result that disproves \Cref{aharoni} for $r=2$ and $s=t$. 

\begin{proposition} \label{th2}
For all even $n\ge 2$, there exists a graph that is properly edge-colored with $n+1$ colors such that there are $n$ edges of each color, but no rainbow matching of size $n$. 
\end{proposition}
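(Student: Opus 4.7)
The plan is to exhibit an explicit counterexample. Take the complete bipartite graph $K_{t,t}$ with parts $A = \{a_1,\ldots,a_t\}$ and $B = \{b_1,\ldots,b_t\}$, and edge-color it with colors $1,\ldots,t$ using the cyclic Latin square $L(i,j) = (i+j) \bmod t$. A short parity argument shows that, when $t$ is even, this Latin square admits no full transversal: in any purported transversal $\{(i,\sigma(i))\}$ the values $i+\sigma(i)$ would sum to $2\sum_i i \equiv 0 \pmod t$, yet a permutation of $\{0,\ldots,t-1\}$ sums to $\binom{t}{2} \equiv t/2 \not\equiv 0 \pmod t$. Thus $K_{t,t}$ in this coloring has no rainbow matching that uses all $t$ colors.

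Next, introduce a new color $t+1$ via a matching $M$ whose edges lie entirely \emph{within} the two parts. Since $t$ is even, we may take $M_A = \{a_1a_2, a_3a_4,\ldots,a_{t-1}a_t\}$ on $A$ and $M_B = \{b_1b_2,\ldots,b_{t-1}b_t\}$ on $B$, and set $M = M_A \cup M_B$. Color all of $M$ with color $t+1$, and let $G = K_{t,t} \cup M$. The coloring of $G$ is proper and uses $t+1$ colors with exactly $t$ edges each: every vertex sees the $t$ distinct Latin-square colors along its $K_{t,t}$-edges and color $t+1$ along its unique $M$-edge.

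To show $G$ has no rainbow matching of size $t$, I would split on whether color $t+1$ appears. If it does not, the matching lives entirely in $K_{t,t}$ and uses all of colors $1,\ldots,t$, so it is a full transversal of $L$, which does not exist. If color $t+1$ does appear, exactly one edge $e_0 \in M$ is used; because $e_0$ lies within $A$ or within $B$, it consumes two vertices from one side. The remaining $t-1$ edges have colors in $\{1,\ldots,t\}$, hence belong to $K_{t,t}$ and each uses one vertex on each side, so they would need $t-1$ distinct vertices on the side containing $e_0$---impossible, since only $t-2$ remain.

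The main obstacle is ensuring that color $t+1$ cannot be used to circumvent the non-transversal property of $L$. A naive placement of the color-$(t+1)$ matching across the parts, or in a disjoint component, would allow a rainbow matching to be assembled from a color-$(t+1)$ edge together with a partial transversal of $L$. The key design choice is to force the color-$(t+1)$ matching to live inside the parts, so that any use of color $t+1$ overspends one side of the bipartition and the second case collapses to a one-line counting contradiction.
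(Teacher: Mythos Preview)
Your construction and argument are correct and essentially identical to the paper's own proof: both build the cyclic Latin-square coloring of $K_{t,t}$, use the standard parity obstruction to rule out a full transversal when $t$ is even, then add color $t+1$ as a perfect matching inside $A$ together with one inside $B$, and dispose of the case where color $t+1$ is used by the same vertex-count on one side ($t-1$ bipartite edges need $t-1$ vertices on the side where only $t-2$ remain). The only cosmetic difference is that the paper phrases the parity step as $\sum (b_j - a_j) = \sum j$ versus $\sum b_j - \sum a_j = 0$, while you phrase it as $\sum L(i,\sigma(i)) \equiv 0$ versus $\equiv t/2$; these are the same computation.
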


Thus, it is natural to find the maximum number $f(n)$ that can be written instead of $n+1$ in the above proposition. \Cref{th2} proves that $f(n) \ge n+1$ for all even $n\ge 2$. We make progress in the upper bound and establish that $f(n) \le 2n + o(n)$. This provides an asymptotic version of \Cref{bgs} in the case of simple graphs. However, we believe that our lower bound on $f(n)$ is closer to the truth. 

\begin{theorem} \label{th}
There exist $n_0, \xi >0$ such that whenever $n \ge n_0$, for every graph $G$ that is properly edge-colored with at least $2(1+n^{-\xi})n$ colors such that there are at least $n$ edges of each color, there is a rainbow matching of $G$ using $n$ colors.
\end{theorem}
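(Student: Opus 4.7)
We apply Theorem~\ref{th1} to a modified graph obtained via a \emph{color-pairing} reduction. Truncate each color class of $G$ to exactly $q$ edges, which is without loss of generality. Then $\sum_v d_v = 2mq$, where $m = |C| \ge 2(1+\epsilon)q$. Randomly pair up the $m$ colors into $m/2 \ge (1+\epsilon)q$ \emph{super-colors}, each being the union of two original color classes. In the super-colored graph, each super-color has exactly $2q$ edges, and by the properness of the original coloring, each vertex is incident to at most $\Delta = 2$ edges of any super-color.

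To invoke Theorem~\ref{th1} with $\Delta = 2$ and a parameter $Q$, we need the maximum degree to be at most $Q$ while $(1+(4/Q)^{1/6}(\log Q)^2)\,Q \le 2q$, which forces $Q \le 2q - O(q^{5/6}(\log q)^2)$. Since the original max degree of $G$ can be as large as $m > 2q$, we first handle the \emph{heavy} vertices $H = \{v : d_v \ge 2q\}$; by double-counting, $|H| \le m$. Using a greedy argument that exploits the fact that each heavy vertex has many incident super-colors, we construct a small rainbow pre-matching $M_0$ among the super-colors that saturates every heavy vertex.

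Restricting the super-colored graph to $V \setminus V(M_0)$ with super-colors not used by $M_0$, the maximum degree drops below $2q$, and each remaining super-color still has at least $2q - 2|V(M_0)|$ edges (since each removed vertex blocks at most $\Delta = 2$ edges of any super-color). Theorem~\ref{th1} then yields a rainbow matching using every remaining super-color, of size $(1+\epsilon)q - |M_0|$. Combining with $M_0$, we obtain a rainbow matching of total size at least $(1+\epsilon)q \ge q$ in the super-coloring. Since distinct super-colors correspond to disjoint pairs of original color classes, this matching is rainbow in the original coloring as well.

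The main obstacle is the edge-density condition: we require $|V(M_0)|$ to be $o(q)$ so that each remaining super-color retains at least $(1+o(1))Q$ edges, which in turn requires $|H| = o(q)$. When $|H|$ is close to its worst case $\sim 2q$, the graph is essentially supported on $\sim 2q$ vertices in a near-Latin-square configuration. In that extremal case we complete the proof by a separate structural argument (or by a finer randomized choice of $M_0$ that spreads losses evenly across super-colors), exploiting the rigidity forced by having so many vertices of near-maximum degree.
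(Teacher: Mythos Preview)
Your pairing reduction is essentially the technique behind Proposition~\ref{weakerbound} (merge colors, then invoke Theorem~\ref{th1}), and it does recover a result of that strength. The problem is that it cannot by itself reach $2(1+\e)q$ colors, and the place where it breaks is exactly the case you flag as an ``obstacle'' and then wave away.

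Concretely: to apply Theorem~\ref{th1} you need max degree $\le Q$ with $Q \le 2q - o(q)$, so you must saturate the heavy set $H=\{v:d_v\ge Q\}$ by a rainbow pre-matching $M_0$. But the double-counting bound only gives $|H|\le m \approx 2(1+\e)q$, and this bound is essentially attained (think of $K_{2q}$ properly colored, or the near-Latin-square picture you mention). When $|H|\sim 2q$ you are forced to take $|M_0|\gtrsim q$, and then each surviving super-color retains at most $2q - 2|V(M_0)| \le 0$ edges, so Theorem~\ref{th1} has nothing to work with. A ``finer randomized $M_0$ that spreads losses evenly'' does not fix this: each super-color has only $2q$ edges and each of the $\sim 2q$ removed vertices can kill two of them, so even in expectation roughly half the edges of every super-color are destroyed, far below the $(1+o(1))Q\approx 2q$ edges Theorem~\ref{th1} requires. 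And ``a separate structural argument exploiting rigidity'' is doing all the work here with no content; the near-Latin-square regime is precisely the hard case, not a degenerate one.

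The paper handles this by developing genuinely new tools rather than reducing to Theorem~\ref{th1}. First, Theorem~\ref{th3} (proved by its own nibble) is used to show that if too many vertices have degree above $2(1+\T)q$ one can already build a rainbow $q$-matching directly; this lets one assume at most $(1-\T)q$ such vertices exist. Second, the remaining case is attacked with a bespoke randomized algorithm (Section~10) that tracks the high-degree vertices separately throughout the process (different deletion probabilities inside and outside $A$, and Step~6 preferentially deleting edges touching $A$), so that the trajectory reaches a rainbow matching of size $q$ before any color class is exhausted. Neither of these pieces is a structural afterthought; together they are what pushes the bound from $4q$ down to $2(1+\e)q$. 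Your plan, as written, stops exactly where the real proof begins.
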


We remark that the proof of this result is quite short and uses \Cref{th1} as a black box. We now show that \Cref{th} is the best possible with respect to the number of edges in each color. We require each color to appear at least $n$ times, since if every color appears only $n-1$ times, then it is possible to construct examples with no matching of size $n$ (and hence no rainbow matching of size $n$), regardless of the number $m$ of colors. To see this, consider the graph that is a disjoint union of $n-1$ copies of the complete bipartite graph $K_{1,m}$ and color each $K_{1,m}$ using all the colors exactly once. Although each color appears as a matching with $n-1$ edges, the graph does not contain any matching with $n$ edges. 

Since this paper was first arXived, a hypergraph generalization of \Cref{th1} and a reproof of \Cref{th} appeared in \cite{DP} by Delcourt and Postle. However, their proofs are more involved as they prove more general results. Moreover, they prove that it suffices to have $n+o(n)$ colors when the underlying graph is bipartite in \Cref{th}, thereby addressing a question we suggested in a previous version of this paper. 

\medskip
\noindent\textbf{Organization.} This paper is organized as follows. The next section contains some standard probabilistic tools, which we use throughout this paper. In \Cref{sec:3}, we disprove \Cref{aharoni} for $r=2$ and $s=t$ by proving \Cref{th2}, and we also prove \Cref{th} assuming the statement of \Cref{th1}. We finally prove \Cref{th1} in \Cref{sec:4}.

\medskip
\noindent\textbf{Notation.} For natural numbers $n$, we denote by $[n]$ the set $\{1,2,\ldots,n\}$. Throughout this paper, for brevity, we omit the floor and ceiling signs, as they do not affect the underlying analysis. For convenience, we say an event $A_n$ happens \emph{with very high probability} (in short, w.v.h.p.) to mean that the probability that $A_n$ holds with probability at least $1-n^{-\omega(1)}$, where $\omega(1)$ is any function whose ratio with $1$ tends to infinity as $n$ tends to infinity. This notion is useful because if there is a collection of $n^{O(1)}$ events where each of them occurs w.v.h.p, then by the union bound, all of the events simultaneously occur w.v.h.p.

We use standard graph theoretic notations. Multigraphs are allowed to contain parallel edges between the same pair of vertices, but they do not contain loops. Two edges are called \emph{parallel} if they have the same pair of endpoints. Consider a multigraph $G$. We denote its vertex set by $V(G)$ and its edge (multi-)set by $E(G)$. We denote by $e(G)$ the number of edges in $G$, i.e., $e(G) = |E(G)|$. For vertices $u,v\in V(G)$, the \emph{multiplicity} of the pair $(u,v)$ is the number of parallel edges between $u$ and $v$. The \emph{multiplicity} of $G$ is defined as the maximum multiplicity of any pair of vertices. For $V\subseteq V(G)$, we denote by $G[V]$ the subgraph of $G$ induced by $V$. For $v\in V(G)$, we denote by $G\setminus v$ the graph $G[V(G)\setminus \{v\}]$. For $v\in V(G)$, the neighborhood of $v$ is denoted by $N_G(v)$ and the degree of $v$ is denoted by $d_G(v)$.

We now consider an edge-colored multigraph $G$. We denote by $\mathscr{C}(G)$ the set of all colors used to color the edges of $G$. For a set of colors $\mathscr{C}$, the multigraph $G$ is called $\mathscr{C}$-rainbow if $\mathscr{C}(G) = \mathscr{C}$ and the edges of $G$ are colored with pairwise distinct colors from $\mathscr{C}$. For $v\in V(G)$ and $c\in \mathscr{C}(G)$, we say $v$ \emph{meets} $c$ if there is an edge incident to $v$ that is colored with $c$. Whenever $G'$ is a subgraph of $G$, we equip $G'$ with the edge-coloring obtained by restricting the edge-coloring of $G$ to $G'$.

%%%%%%%%%%%%%%%%%%%%%%%%%%%%%%%%%%%%%%%%%%%%%%%%
\section{Preliminaries}
To prove our results, we will use the following standard probabilistic tools throughout this paper. We start with a couple of concentration inequalities. 

\begin{theorem} [Chernoff bound, see \cite{H,MU}] \label{chernoff}
Let $X = \sum_{i=1}^n X_i$, where $X_i = 1$ with probability $p_i$ and $X_i = 0$ with probability $1 - p_i$, and all $X_i$ are independent. Let $\mu = \E(X) = \sum_{i=1}^n p_i$. Then for any $0 \le \l \le \mu$, 
\[
\P\left[|X - \mu| \ge \l\right] \le 2 \exp\left(-\frac{\l^2}{3\mu}\right).
\]
Moreover, for $\l \ge 8\mu$, we have $\P[X\ge \l] \le \exp(-\l)$.
\end{theorem}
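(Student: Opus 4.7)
The plan is to follow the standard moment generating function approach, separately handling the upper tail $\P[X \ge \mu + \l]$ and the lower tail $\P[X \le \mu - \l]$, and then applying a union bound to obtain the factor of $2$ in front of the exponential.

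For the upper tail, I would first use Markov's inequality applied to $e^{tX}$ for a free parameter $t > 0$, writing
$$\P[X \ge \mu + \l] = \P\bigl[e^{tX} \ge e^{t(\mu + \l)}\bigr] \le e^{-t(\mu + \l)}\,\E\bigl[e^{tX}\bigr].$$
Since the $X_i$ are independent Bernoulli$(p_i)$, the moment generating function factorizes as $\E[e^{tX}] = \prod_i \E[e^{tX_i}] = \prod_i \bigl(1 + p_i(e^t - 1)\bigr)$. Using $1 + x \le e^x$ on each factor gives $\E[e^{tX}] \le \exp\bigl(\mu(e^t - 1)\bigr)$. Substituting back,
$$\P[X \ge \mu + \l] \le \exp\bigl(\mu(e^t - 1) - t(\mu + \l)\bigr).$$
Optimizing in $t$ gives $t = \log(1 + \l/\mu)$, and after simplification the bound becomes $\exp\bigl(-\mu\, h(\l/\mu)\bigr)$ where $h(x) = (1+x)\log(1+x) - x$. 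The remaining routine step is to verify that $h(x) \ge x^2/3$ for $0 \le x \le 1$; this is where the constant $3$ in the exponent comes from, and it works precisely because the hypothesis $0 < \l < \mu$ ensures $\l/\mu < 1$.

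For the lower tail $\P[X \le \mu - \l]$, I would run the same program with $t < 0$ (equivalently, apply Markov's inequality to $e^{-tX}$ with $t > 0$), obtaining the analogous bound $\exp\bigl(-\mu\,h(-\l/\mu)\bigr)$ where now I use $h(-x) \ge x^2/2 \ge x^2/3$ for $0 \le x \le 1$. Combining the two tails via a union bound then yields the claimed $\P[|X - \mu| \ge \l] \le 2 e^{-\l^2/(3\mu)}$.

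The main (and essentially only) obstacle is the elementary analytic estimate $h(x) \ge x^2/3$ on the relevant interval, which can be verified by computing $h(0) = h'(0) = 0$ and checking the second derivative, or equivalently by a Taylor expansion argument. All other steps—Markov, independence, and the inequality $1 + x \le e^x$—are entirely standard. Since the theorem is quoted from the reference \cite{H}, I would expect the authors simply to cite it rather than reproduce the computation.
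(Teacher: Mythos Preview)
Your proof sketch is correct and is the standard moment-generating-function argument; the paper itself does not prove this statement at all but simply states it with a citation to \cite{H}, exactly as you anticipated in your final sentence.
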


To keep track of the evolution of certain parameters in our randomized algorithm for the proof of \Cref{th1}, we will often use Talagrand's inequality. We need a couple of definitions to state it. For a random variable $X : \Omega \rightarrow \R$ with $\Omega = \prod_{i=1}^n \Omega_i$ being a product probability space, we say that
\begin{itemize}
\item $X$ is \emph{$\ell$-Lipschitz}, if for every $\omega \in \Omega$, changing $\omega$ in any single coordinate affects the value of $X(\omega)$ by an additive factor of at most $\ell$.
\item $X$ is \emph{$r$-certifiable}, if for every $\omega \in \Omega$ and $s \in \R$ such that $X(\omega) \ge s$, the following holds. There exists a set $I \subseteq \{1,\ldots,n\}$ of size at most $rs$ such that every $\omega' \in \Omega$ that agrees with $\omega$ on the coordinates indexed by $I$ also satisfies that $X(\omega') \ge s$.
\end{itemize}

\begin{theorem} [Talagrand's inequality, see \cite{MRbook,MR}] \label{talagrand}
Suppose that $X$ is an $\ell$-Lipschitz and $r$-certifiable non-negative random variable defined on a product probability space. Then, for every $\l\ge 0$, we have 
\[
\P\left[|X - \E(X)| > \l + 20\ell\sqrt{r\E(X)} + 64\ell^2r\right] \le 4 \exp\left(-\frac{\l^2}{8\ell^2r(\E(X)+\l)}\right).
\]
\end{theorem}

Since the number of vertices and colors in $G$ in \Cref{th1} may be arbitrarily large, simple union bounds are often insufficient to control certain global parameters in the analysis of our randomized algorithm. Accordingly, we use the following version of the standard local lemma in our proof. 

\begin{theorem} [Lov\'asz local lemma, see \cite{AS}] \label{lll}
Let $A_1, \ldots, A_n$ be events in a probability space. Suppose that there exist $p$ and $d$ such that all $\P[A_i] \le p$, and each event $A_j$ is mutually independent of all the other events $\{A_i\}$ except at most $d$ of them. If $ep(d+1) \le 1$, then with positive probability none of the events $A_1, \ldots, A_n$ occur.
\end{theorem}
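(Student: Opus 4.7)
The plan is to prove the stronger asymmetric (Spencer--Erd\H{o}s) form of the local lemma first, and then obtain the stated symmetric version as a corollary by an optimal choice of parameters. Specifically, I would first prove: given events $A_1,\ldots,A_n$ with a dependency graph $G$ (so each $A_i$ is mutually independent of $\{A_j : j \not\sim_G i, j\ne i\}$), if one can find reals $x_1,\ldots,x_n \in [0,1)$ with $\P[A_i] \le x_i \prod_{j \sim_G i}(1-x_j)$ for all $i$, then $\P\bigl[\bigcap_i \overline{A_i}\bigr] \ge \prod_i (1-x_i) > 0$.

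The heart of the argument is the following induction on $|S|$: for every $i$ and every $S \subseteq [n]\setminus\{i\}$,
\[
\P\bigl[A_i \,\big|\, \textstyle\bigcap_{j\in S}\overline{A_j}\bigr] \;\le\; x_i.
\]
The base case $S = \emptyset$ is immediate from the hypothesis. For the inductive step I would split $S = S_1 \cup S_2$, where $S_1 = \{j \in S : j \sim_G i\}$ and $S_2 = S \setminus S_1$, and write
\[
\P\bigl[A_i \,\big|\, \textstyle\bigcap_{j\in S}\overline{A_j}\bigr]
\;=\; \frac{\P\bigl[A_i \cap \bigcap_{j\in S_1}\overline{A_j} \,\big|\, \bigcap_{k \in S_2}\overline{A_k}\bigr]}{\P\bigl[\bigcap_{j\in S_1}\overline{A_j} \,\big|\, \bigcap_{k\in S_2}\overline{A_k}\bigr]}.
\]
The numerator is bounded above by $\P[A_i \mid \bigcap_{k\in S_2}\overline{A_k}] = \P[A_i] \le x_i\prod_{j\sim_G i}(1-x_j)$, using mutual independence of $A_i$ from the events indexed by $S_2$. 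For the denominator, I would apply the chain rule to expand it as a product over $S_1$ and apply the inductive hypothesis (on strictly smaller conditioning sets) to bound each factor below by $1 - x_j$ for $j \in S_1$, obtaining $\prod_{j\in S_1}(1-x_j)$. Dividing gives the desired bound $x_i$. Once this key lemma is established, the conclusion $\P[\bigcap_i\overline{A_i}] \ge \prod_i(1-x_i)$ follows by another chain-rule expansion.

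To recover the symmetric statement, I would set $x_i = 1/(d+1)$ uniformly. Since each $A_i$ has at most $d$ neighbors in the dependency graph,
\[
x_i \prod_{j\sim_G i}(1-x_j) \;\ge\; \frac{1}{d+1}\Bigl(1 - \frac{1}{d+1}\Bigr)^{d} \;\ge\; \frac{1}{e(d+1)},
\]
using the standard estimate $(1-1/(d+1))^d \ge 1/e$. The hypothesis $e p(d+1) \le 1$ then gives $\P[A_i] \le p \le 1/(e(d+1))$, so the asymmetric condition is satisfied and we conclude $\P[\bigcap_i \overline{A_i}] > 0$.

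The main obstacle is the inductive step: one must carefully arrange the conditional probabilities so that independence can be invoked in the numerator (which forces the split $S = S_1 \cup S_2$ along the dependency graph) while also ensuring that the denominator decomposes into pieces to which the inductive hypothesis applies on strictly smaller conditioning sets. The choice of which events to peel off and in what order is the delicate point; everything else reduces to the estimate $(1-1/(d+1))^d \ge 1/e$ and elementary manipulation.
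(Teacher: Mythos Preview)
Your argument is correct and is the standard textbook proof (indeed, essentially the one in the cited reference \cite{AS}). However, note that the paper does not give its own proof of this statement: Theorem~\ref{lll} is listed in the Preliminaries section as a known tool, stated with a reference and used as a black box later in the proof of Lemma~\ref{locallemma}. So there is nothing to compare against; your proposal simply supplies the omitted (classical) proof.
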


We next give a weaker bound for \Cref{th1}, which will be used in its proof.

\begin{proposition} \label{useAlon}
Suppose $G$ is an edge-colored multigraph with maximum degree at most $n$, with at least $4en$ edges of each color. Then, there is a rainbow matching in $G$ that uses every color. 
\end{proposition}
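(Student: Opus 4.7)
The plan is to prove the proposition via a one-shot random procedure analyzed by the Lovász Local Lemma (Theorem~\ref{lll}). For each color $c$, I would independently pick a single edge $e_c$ uniformly at random from its color class $C_c$; if no two of the selected edges share a vertex, then $\{e_c\}_c$ is automatically a rainbow matching using every color, which is exactly the desired conclusion.

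The bad events are indexed by unordered pairs $\{e,f\}$ of edges with $c(e) \neq c(f)$ and $e \cap f \neq \emptyset$: let $A_{\{e,f\}}$ be the event that both $e$ and $f$ are selected. By independence across colors, $\P\bigl[A_{\{e,f\}}\bigr] = 1/(|C_{c(e)}|\,|C_{c(f)}|)$, and $A_{\{e,f\}}$ is mutually independent of every event whose color pair is disjoint from $\{c(e),c(f)\}$. The key step is to bound the dependency degree $d$: for each color $c^\ast \in \{c(e),c(f)\}$, the bad pairs involving $c^\ast$ can be enumerated by first choosing an edge $e' \in C_{c^\ast}$ in $|C_{c^\ast}|$ ways, and then an incident edge of a different color, of which there are at most $2(q-1)$ by the maximum-degree-$q$ hypothesis. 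Summing over the two choices of $c^\ast$ gives
\[
d \;\le\; 2(q-1)\bigl(|C_{c(e)}| + |C_{c(f)}|\bigr).
\]
Substituting these estimates into the LLL condition $ep(d+1) \le 1$ and using $|C_c| \ge 4eq$ reduces it, up to the negligible $+1$ in $d+1$, to $2eq\bigl(1/|C_{c(e)}| + 1/|C_{c(f)}|\bigr) \le 1$, which clearly holds, so the LLL supplies the desired rainbow matching.

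The one point that deserves attention is that the dependency degree $d$ scales with the (possibly very large) sizes $|C_{c(e)}|$ and $|C_{c(f)}|$, so at first glance one might expect the LLL bound to degrade as the graph grows. What saves the argument is that $\P[A_{\{e,f\}}]$ decays as the reciprocal of exactly the same quantity, so the two growths cancel and the LLL inequality ends up controlled purely by the hypothesis $|C_c| \ge 4eq$ on each color class. I do not foresee any other obstacle.
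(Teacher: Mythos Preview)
Your approach is essentially the paper's: the paper builds the auxiliary multipartite graph~$\Gamma$ whose parts are the color classes and whose edges record incidence in~$G$, and then cites Alon's independent-transversal result (Proposition~\ref{Alon}).  That proposition is itself proved by exactly the random experiment you describe (pick one vertex per part, bad events indexed by edges, apply the Local Lemma), so you have simply inlined the black box.

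There is, however, a genuine gap in how you invoke Theorem~\ref{lll}.  The symmetric Local Lemma requires a \emph{single} pair $(p,d)$ with $\P[A_i]\le p$ and dependency degree $\le d$ for \emph{every} event, and then checks $ep(d+1)\le 1$ once.  You instead verify a per-event inequality $e\cdot\P[A_{\{e,f\}}]\cdot(d_{\{e,f\}}+1)\le 1$, and your ``cancellation'' paragraph is precisely the claim that this suffices.  It does not: if some color class has size $N\gg q$, then events involving two small classes still have probability $1/(4eq)^2$, while events involving the large class force the uniform bound $d\ge 2(q-1)N$, so $ep(d+1)\to\infty$.  The cancellation you observe is real but lives in the \emph{asymmetric} Local Lemma, not in Theorem~\ref{lll}.

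The repair is one line: first discard edges so that every color class has exactly $\lceil 4eq\rceil$ edges (this only makes the task harder).  Now all $|C_c|$ coincide, your $p$ and $d$ genuinely are uniform, and your arithmetic $ep(d+1)\le e\cdot(4eq)^{-2}\cdot\bigl(2\cdot 4eq\cdot 2(q-1)+1\bigr)<1$ finishes the proof.  This trimming step is also implicit in the standard proof of Proposition~\ref{Alon} that the paper cites.
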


To prove \Cref{useAlon}, we use the following result of Alon \cite{A88}, which can be easily proved using the local lemma (i.e., \Cref{lll}). For the best possible result, the readers are advised to see \cite{H01}.

\begin{proposition}[\cite{A88}] \label{Alon}
let $G$ be a multipartite graph with maximum degree at most $\D$, whose parts $V_1, \ldots, V_r$ all have size $|V_i| \ge 2e\D$. Then, $G$ has an independent set containing exactly one vertex from each $V_i$.
\end{proposition}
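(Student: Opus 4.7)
The plan is to apply the symmetric Lovász local lemma (Theorem \ref{lll}) directly to the natural randomized procedure of picking one vertex uniformly at random from each part. First I would reduce to the case $|V_i| = 2e\D$ for every $i$ by discarding excess vertices from each part — removing vertices only shrinks degrees, so the hypotheses are preserved and any independent transversal of the reduced graph is an independent transversal of the original.

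Next, for each $i \in \{1,\dots,r\}$, let $v_i$ be a uniformly random vertex of $V_i$, chosen independently across parts. For every edge $e = uv$ of $G$ with $u \in V_i$ and $v \in V_j$ (necessarily $i \ne j$ since $G$ is multipartite), let $A_e$ be the bad event that both endpoints of $e$ are selected, i.e., $v_i = u$ and $v_j = v$. Then
\[
\P[A_e] \;=\; \frac{1}{|V_i|}\cdot\frac{1}{|V_j|} \;=\; \frac{1}{(2e\D)^2} \;=:\; p.
\]
The event $A_e$ depends only on the random choices $v_i$ and $v_j$, so it is mutually independent of all events $A_{e'}$ for which $e'$ has no endpoint in $V_i \cup V_j$. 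The number of edges with an endpoint in $V_i \cup V_j$ is at most $\D|V_i| + \D|V_j| = 4e\D^2$ (since every vertex has degree at most $\D$), and this count already includes $e$ itself. Hence $A_e$ is mutually independent of all but at most $d := 4e\D^2 - 1$ of the other bad events.

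To verify the LLL condition, I would compute
\[
e\,p\,(d+1) \;\le\; e \cdot \frac{1}{(2e\D)^2} \cdot 4e\D^2 \;=\; e \cdot \frac{4e\D^2}{4e^2\D^2} \;=\; 1,
\]
so Theorem \ref{lll} applies and gives $\P\bigl[\bigcap_e \overline{A_e}\bigr] > 0$. In particular, there exists an outcome in which no bad event occurs, which means $\{v_1,\dots,v_r\}$ spans no edge of $G$; this set is the desired independent transversal. The argument has no real obstacle — the only care is in the dependency count, where one must use the fact that each part contributes at most $\D|V_i|$ incident edges, and observe that the over-counting of $V_i$–$V_j$ edges works in our favor when checking $ep(d+1)\le 1$.
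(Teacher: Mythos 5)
Your proof is correct and is exactly the standard local lemma argument that the paper points to (it cites Alon \cite{A88} and notes the result "can be easily proved using the local lemma," i.e., Theorem \ref{lll}): trim the parts to size $2e\D$, sample one vertex per part uniformly, take a bad event per edge with $p=\frac{1}{(2e\D)^2}$, and bound the dependency degree by $4e\D^2-1$ so that $ep(d+1)\le 1$. The dependency count and the observation that the edge itself is included in the $4e\D^2$ bound are handled correctly, so there is nothing to fix.
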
 

\begin{proof}[{\bf Proof of \Cref{useAlon}}]
Let $r$ be the number of colors in $G$, and $E_i$ denote the set of all edges of the $i$-th color. Consider the $r$-partite graph $\Gamma$ where the $i$-th part is $E_i$ (i.e., the edges in $G$ are vertices of $\Gamma$), and two vertices in $\Gamma$ are adjacent if the corresponding edges in $G$ are incident to each other. Thus, the maximum degree of $\Gamma$ is at most $2n$, and each part contains at least $2e \cdot 2n$ vertices. Thus, by \Cref{Alon} with $\D = 2n$, the graph $\Gamma$ has an independent set containing exactly one vertex from each $E_i$. Observe that such an independent set in $\Gamma$ corresponds with a rainbow matching in $G$ using every color. This completes the proof of \Cref{useAlon}.
\end{proof}

%%%%%%%%%%%%%%%%%%%%%%%%%%%%%%%%%%%%%%%%%%%%%%%
\section{Lower and upper bounds for Aharoni-Berger conjecture} \label{sec:3}
We prove \Cref{th2,th} in this section. 

\subsection{Construction for \Cref{th2}}
\begin{proof}[{\bf Proof of \Cref{th2}}]
Fix an even $n\ge 2$. Consider a vertex set $A \cup B$, where $A$ and $B$ have $n$ vertices each, and recognize each of $A$ and $B$ by the group $\mathbb{Z}_n$. Consider a graph $G$ on the vertex set $A \cup B$ where for each $j \in \mathbb{Z}_n$, introduce a color $j$ with $n$ edges such that each $a \in A$ is adjacent to $a+j \in B$. Thus, we have $n$ colors, each of which is a matching of size $n$. First, we prove that there is no rainbow matching using all of these $n$ colors. For the sake of contradiction, assume that we have such a rainbow matching and fix such a matching. All colors and vertices of $G$ have to participate in such a matching. Let $a_jb_j$ denote the edge in color $j \in \mathbb{Z}_n$ in the rainbow matching, where $a_j \in A$ and $b_j \in B$. Clearly, we have 
\[
\sum_{j \in \mathbb{Z}_n} a_j = \sum_{j \in \mathbb{Z}_n} b_j = \sum_{j \in \mathbb{Z}_n} j = \frac{n(n-1)}{2}.
\]
But by definition, we should have $\sum_{j \in \mathbb{Z}_n} (b_j - a_j) = \sum_{j \in \mathbb{Z}_n} j$, which is a contradiction for even $n$. 

Now let us introduce an extra $(n+1)$-st color in $G$, whose edges are the union of an arbitrary perfect matching in $A$ and an arbitrary perfect matching in $B$. This new color clearly has $n$ edges. Note that all the matchings are still disjoint because any matching considered before contains edges between $A$ and $B$ only. We claim that even after introducing this extra color, we still cannot have a rainbow matching of size $n$. For the sake of contradiction, assume that we have such a rainbow matching of size $n$. Clearly, the extra color has to participate in that because we could not do it before. Without loss of generality, the edge with the new color in the rainbow matching is between two vertices $u,v \in A$. Now it is impossible to have a rainbow matching of size $n-1$ in $G[(A\cup B)\setminus \{u,v\}]$ using the rest of the colors because each such edge uses a distinct vertex from each of $A$ and $B$. Hence, we have a contradiction. This proves \Cref{th2}.
\end{proof}

%%%%%%%%%%%%%%%%%%%%%%%%%%%%%%%%%%%%%%%%%%%%%%%%
\subsection{Proof of \Cref{th}}
We first establish a crude upper bound on the number of colors needed in \Cref{th} in order to prove the theorem in full generality. For this, we prove the following weaker version of \Cref{th} using \Cref{th1}.

\begin{proposition} \label{weakerbound}
There exists $n_0$ such that whenever $n\ge n_0$, for every graph $G$ that is properly edge-colored with at least $4n$ colors such that there are at least $n$ edges of each color, there is a rainbow matching of $G$ using $n$ colors.
\end{proposition}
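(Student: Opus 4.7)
My plan is to apply Theorem \ref{th1} with $\Delta=1$ (the coloring is proper), after truncating the high-degree vertices of $G$. Set
\[
q_1 := q - \lceil 2q^{5/6}(\log q)^3 \rceil,
\]
so that $(1+\epsilon(q_1))q_1 \le q_1 + q_1^{5/6}(\log q_1)^2 \le q$, where $\epsilon(q_1) = (\log q_1)^2 q_1^{-1/6}$ is the error term of Theorem \ref{th1} when $\Delta = 1$. This choice ensures that the color-size hypothesis of Theorem \ref{th1} at parameter $q_1$ is automatically satisfied by any color class of size at least $q$.

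First I would handle the easy case: if $G$ has maximum degree at most $q_1$, then Theorem \ref{th1} applies directly to $G$, producing a rainbow matching that uses every one of the $\ge 4q$ colors. Any $q$ of its edges form the desired rainbow matching of size $q$.

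If $G$ has maximum degree exceeding $q_1$, my plan is to reduce to the easy case by deleting the set $H := \{v \in V(G) : d_G(v) > q_1\}$ of high-degree vertices. The subgraph $G - H$ has maximum degree at most $q_1$, and because the coloring is proper, each color class loses at most $|H|$ edges in passing from $G$ to $G - H$, so every color retains at least $q - |H|$ edges. As long as $|H| \le q^{5/6}(\log q)^3$, this is $\ge (1+\epsilon(q_1))q_1$, and Theorem \ref{th1} applied to $G - H$ finishes the proof as in the first case.

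The main obstacle will be the residual case $|H| > q^{5/6}(\log q)^3$. I expect to handle it by using the abundance of colors (at least $4q$ while each $v \in H$ has at least $q_1$ incident colors) to greedily build a small rainbow matching $M_0$ whose vertex set covers $H$: at each step, a vertex $v \in H$ not yet covered is matched to a fresh vertex along an edge whose color has not been used, which is possible by a short counting argument since only $O(|H|)$ vertices and colors have been touched. After deleting $V(M_0)$ and the $|M_0| \le |H|$ colors used by $M_0$, the new graph has maximum degree at most $q_1$, at least $4q - |M_0|$ remaining colors, and each retained color still has enough edges for Theorem \ref{th1} (with parameter $q_1$) to extend $M_0$ to a rainbow matching of total size at least $q$.
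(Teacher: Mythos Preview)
Your first two cases are fine, but the residual case $|H| > q^{5/6}(\log q)^3$ has a genuine gap: there is no upper bound on $|H|$. For example, if $G$ is $K_{4q,4q}$ with a Latin-square colouring, every vertex has degree $4q > q_1$, so $|H| = 8q$. More generally one can build proper colourings with $4q$ colours, each of size $\ge q$, in which $\Theta(q)$ vertices have degree only slightly above $q_1$. In such graphs your greedy cover of $H$ stalls: at step $i$ the chosen $v \in H$ is only guaranteed $d(v) > q_1$, and after discarding at most $i$ used colours and $2i$ used vertices you have at most $q_1 - 3i$ viable edges, so the process cannot be continued past $i \approx q_1/3 < q$, yet $H$ may still be far from covered and $|M_0| < q$.

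Even when the greedy does cover $H$, the follow-up application of Theorem~\ref{th1} breaks. You must delete $V(M_0)$, which has $2|M_0| \ge |H|$ vertices, so every remaining colour drops to at most $q - |H|$ edges. Since the maximum degree after removing $H$ can still equal $q_1$, Theorem~\ref{th1} would need a parameter $q' \ge q_1$ with $(1+\epsilon(q'))q' \le q - |H|$; but $(1+\epsilon(q_1))q_1 > q_1 = q - 2q^{5/6}(\log q)^3$, so this is impossible once $|H|$ exceeds about $q^{5/6}(\log q)^3$, which is exactly the regime you are in. Thus neither step of your residual-case plan goes through.

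The paper avoids this by choosing its cutoff adaptively: it takes the least $k$ with $d_k \le 3(q-k)$ and removes the $k$ highest-degree vertices. In Case~2 ($k \le q-\sqrt{q}$) the key extra idea is to \emph{merge four colours into one}, so that each new colour has $4(q-k)$ edges while the maximum degree is at most $3(q-k)$; this restores the slack needed for Theorem~\ref{th1} at the new scale $q-k$. In Case~1 the removed vertices satisfy the ladder $d_i > 3(q-i)$, which is precisely what makes the final greedy extension succeed. Your argument uses neither the merging trick nor this degree ladder, and that is why it cannot close the gap when $|H|$ is of order $q$.
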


\begin{proof}
To prove this proposition, we start by showing that it holds if $4n$ is replaced by $2n^2$. Suppose we have a properly edge-colored graph $G$ with $2n^2$ colors, where each color appears $n$ times. Consider a maximum size rainbow matching $M = \{e_1, e_2, \ldots, e_m\}$ of $G$. Assume for the sake of contradiction that $|M| = m \le n-1$. Let $C\subseteq \mathscr{C}(G)$ denote all the colors used to color the edges connecting two vertices in $V(M)$. Clearly, $|C| \le \binom{2m}{2} < 2n^2 - 2n$. Thus, there are $2n$ colors $c_1, c_2, \ldots, c_{2n}\in \mathscr{C}(G)$ which do not appear in any of the edges between two vertices in $V(M)$. If there is an edge with any of these $2n$ colors that does not use any of the vertices in $V(M)$, then we are already done because we can obtain a larger rainbow matching by adding that edge to $M$. Hence, all the edges using colors from $\{c_1, c_2, \ldots, c_{2n}\}$ uses exactly one vertex in $V(M)$. By a simple application of the pigeonhole principle, for every color $c\in \{c_1, c_2, \ldots, c_{2n}\}$, there is an edge $e \in M$ such that there are two edges with color $c$, which are incident to $e$. By another application of the pigeonhole principle, there exists an edge $e \in M$ such that there are three colors (without loss of generality, say $c_1$, $c_2$, and $c_3$) with the property that there are two edges with each color in $\{c_1, c_2, c_3\}$ such that all six of these edges are incident to $e$. Now, it is easy to find two edges among these six edges such that they are not incident to each other and have distinct colors. By observing that one obtains a larger rainbow matching by adding these two edges and subtracting $e$ from $M$, we establish that \Cref{weakerbound} holds with $4n$ replaced by $2n^2$. 

We next show that $4n$ colors suffice in \Cref{weakerbound} for all sufficiently large $n$, as promised. For this, let $n$ be sufficiently large for the remainder of this proof. Suppose we have a properly edge-colored graph $G$ with $4n$ colors, where each color appears $n$ times. Let $G_0 = G$. For $i\in [|V(G)|]$, we inductively define $d_i$ to be the maximum degree of $G_{i-1}$, then choose $v_i$ to be a vertex of degree $d_i$ in $G_{i-1}$, and finally define $G_i$ to be the graph $G_{i-1}\setminus v_i$. Observe that $d_1 \ge d_2 \ge \cdots \ge d_{|V(G)|}$. We define $k$ to be the minimum integer such that $k\in [|V(G)|]$ and $d_k \le 3(n - k)$, if it exists. We split the proof into two cases based on the value of $k$. 

\medskip
\noindent\textbf{Case 1:} Suppose $k > n-\sqrt{n}$ or $k$ does not exist. In the graph $G_{n-\sqrt{n}}$, there are at least $\sqrt{n}$ edges of each color, and the number of colors is $4n \ge 2\left(\sqrt{n}\right)^2$. Thus, using the version of \Cref{weakerbound} established at the beginning of this proof with $4n$ replaced by $2n^2$, we can find a rainbow matching $M$ using $\sqrt{n}$ colors in the graph $G_{n-\sqrt{n}}$. Now, we can greedily add an edge incident to each of the vertices in $\{v_i : i\in [n-\sqrt{n}]\}$ to $M$ to get a rainbow matching of size $n$. This can be done inductively as follows. Note that the number of colors used in $M$ is $\sqrt{n}$, and the number of vertices in $M$ is $2\sqrt{n}$. Let $M_{n-\sqrt{n}} = M$. For some $i\in [n - \sqrt{n}]$, we assume that there is a rainbow matching $M_i$ of size $n-i$ such that $v_j\notin V(M_i)$ for every $j\in [i]$. Then, one can pick a neighbor $u_i$ of $v_i$ in the graph $G_{i-1}$ out of more than $3 (n-i)$ neighbors so that neither the color of the edge $u_iv_i$ nor the vertex $u_i$ is used in the rainbow matching $M_i$ built yet, and then we add the edge $u_iv_i$ to $M_i$ to obtain a rainbow matching $M_{i-1}$ of size $n-i+1$ such that $v_j\notin V(M_i)$ for every $j\in [i-1]$ (the last part follows because $u_i\in V(G_{i-1})$ and thus $u_i\notin \{v_1,\ldots,v_{i-1}\}$). This inductive process builds a rainbow matching $M_0$ of size $n$ in $G$.

\medskip
\noindent\textbf{Case 2:} Suppose $k \le n-\sqrt{n}$. In the graph $G_k$, there are at least $n-k$ edges of each color, and the maximum degree of $G_k$ is at most $3(n-k)$. Now partition the set of $4n$ colors into $n$ subsets containing $4$ colors each and merge $4$ colors from each subset. This ensures that the total number of new colors after merging is $n$, each new color has at least $4(n-k)$ edges in $G_k$, and the subgraph of $G_k$ induced by each new color has maximum degree at most $4$. Since the maximum degree of $G_k$ is at most $3(n-k)$, we apply \Cref{th1} to obtain a rainbow matching in $G_k$ using the $n$ new colors, which yields a rainbow matching of size $n$ in $G$. 
\end{proof}

We are now ready to prove \Cref{th}.
\begin{proof}[{\bf Proof of \Cref{th}}]
Let $\xi >0$ be sufficiently small and $n$ be sufficiently large relative to $\xi$. Let $\e = n^{-\xi}$. Assume the graph $G$ is properly edge-colored with $2(1+\e)n$ colors such that there are $n$ edges of each color. (Note that this can be assumed instead of assuming that there are at least $n$ edges of each color, without loss of generality.) Define $\T = \e/2$. Let $A\subseteq V(G)$ be a maximal set satisfying $|A|\le (1-\T)n$ and every vertex in $A$ has degree at least $2(1+\T)n$. We will split into two cases based on the size of $A$. 

\medskip
\noindent\textbf{Case 1:} Suppose $|A| = (1-\T)n$. Let $G'$ denote the subgraph of $G$ after deleting all the edges incident to $A$. We have at least $\T n$ edges in each color in $G'$, and the number of colors is at least $2n$. Thus, by applying \Cref{weakerbound}, we find a rainbow matching $M$ of size $\T n$ in $G'$. Now consider the subgraph $G''$ of $G$ after discarding all the vertices in $V(M)$ and all the edges colored with a color used in $M$. For every fixed $v \in A$, we discard at most $3\T n$ edges incident to $v$, and so, $v$ still has at least $2(1+\T)n - 3\T n - (1-\T)n = n$ neighbors outside of $A$ in $G''$. Let $B = V(G)\setminus A$ and $C$ denote the set of all colors. We now consider an auxiliary bipartite graph $H$ with the vertex set bipartition $(B,C)$ that is edge-colored with set of colors $A$, where for every $(a,b,c)\in A\times B\times C$, include the edge $bc$ in $H$ with color $a$ if $ab$ is an edge in $G''$ with color~$c$. Observe that $H$ is properly edge-colored with $(1-\T)n$ colors and each of them appears at least $n$ times. Thus, by \Cref{gao} or \Cref{th1}, $H$ has a rainbow matching of size $(1-\T)n$. This implies that there is a rainbow matching $M'$ in $G''$ using all the vertices in $A$. Since the vertices and colors used in $M$ are not present in $G''$, the union of $M$ and $M'$ gives us a rainbow matching of size $n$ in $G$, as desired. 

\medskip
\noindent\textbf{Case 2:} Suppose $|A| < (1-\T)n$. By definition of $A$, the vertices outside $A$ have degree less than $2(1+\T)n$ in $G$. We consider a subgraph $G'$ of $G$ by independently deleting all the edges incident to $A$ with probability $p = \frac{\e-\T}{1+\e}$. Then, by Chernoff bound combined with the union bound, w.v.h.p. every vertex in $A$ has degree in $G'$ at most $2(1+\e)n\cdot (1-p) + n^{2/3} = 2(1+\T)n + n^{2/3}$. Thus, w.v.h.p. $G'$ has maximum degree at most $2(1+\T)n + n^{2/3}$. Since the graph $G$ contains at most $(1-\T)n$ edges of each color incident to $A$, by Chernoff bound combined with the union bound, w.v.h.p. for every $c\in \mathscr{C}(G)$, the number of edges of color $c$ in $G'$ is at least 
\begin{equation} \label{eq:degree in G'}
(1-\T)n\cdot (1-p) + \T n - n^{2/3} = \left(1 + \frac{\T^2}{1+\T}\right)\frac{1+\T}{1+\e} n - n^{2/3} \ge \left(1+\frac{\T^2}{2}\right)\frac{1+\T}{1+\e} n.
\end{equation}
At this point, we consider a subgraph $G''$ of $G'$ by deleting each color independently with probability $q = 1 - \frac{1+\T^3}{2(1+\e)}$ (where deleting a color means deleting all edges of that color). By Chernoff bound, w.v.h.p. the number of colors survived in $G''$ is $|\mathscr{C}(G'')| \ge 2(1+\e)n\cdot (1-q) - n^{2/3} \ge n$. Using the bound on maximum degree of $G'$, together with Chernoff bound and the union bound, w.v.h.p. the graph $G''$ has maximum degree at most $(2(1+\T)n + n^{2/3})\cdot (1-q) + n^{2/3}\le (1+2\T^3)\frac{1+\T}{1+\e} n$. (Here, we use the union bound over all vertices in $V(G)$ with positive degree, and the number of such vertices is at most $|\mathscr{C}(G)|\cdot 2n \le 5n^2$.) Using this and the lower bound in \eqref{eq:degree in G'} for the number of edges of each color in $\mathscr{C}(G'')$ in $G''$, we can apply \Cref{gao} or \Cref{th1} to obtain that w.v.h.p. there is a rainbow matching of size $|\mathscr{C}(G'')| \ge n$ in $G''$. This proves that there is a rainbow matching of size $n$ in $G$, which finishes the proof of \Cref{th}.
\end{proof}

\noindent\textbf{Remark on the proof of \Cref{th}.} If one wants to improve our proof to obtain a bound of $(1+\e)n$ instead of $2(1+\e)n$ in \Cref{th}, we remark that the argument in Case 2 works as it is (with a modified definition of $A$ which will contain vertices with degree at least $(1+\T)n$). However, the argument in Case 1 is quite wasteful and does not easily extend. Nonetheless, it is possible that a more in-depth stability-type analysis for Case 1 may help us in reaching closer to the bound of $(1+\e)n$ in \Cref{th}.

%%%%%%%%%%%%%%%%%%%%%%%%%%%%%%%%%%%%%%%%%%%%%%%%
\section{Rainbow matchings in sparse graphs with non-proper coloring} \label{sec:4}
We will prove \Cref{th1} in this section. The first three subsections are devoted to proving the following version of \Cref{th1} with a weaker condition on the edge multiplicities and on the number of edges of the same color incident to any vertex. This result will later be used in Subsection~\ref{subsec:reduction of degree} to prove \Cref{th1}.

\begin{theorem} \label{th1 small codegree}
There exists $\xi > 0$ such that for every $C> 0$, there exists $n_0 > 0$ satisfying the following for every $n\ge n_0$. Suppose $G$ is an edge-colored multigraph with maximum degree at most $n$ such that there are at least $(1 + n^{-\xi})n$ edges of each color. Furthermore, suppose that the multiplicity of $G$ is at most $C$ and at most $C$ edges of the same color are incident to any vertex. Then there is a rainbow matching in $G$ that uses every color.
\end{theorem}

%%%%%%%%%%%%%%%%%%%%%%%%%%%%%%%%%%%%%%%%%%%%%%%%%
\subsection{Randomized algorithm} \label{sec:algorithm}
In this subsection, we give a randomized algorithm that will be used to prove \Cref{th1 small codegree}. Throughout this and the next two subsections, we work in the framework of \Cref{th1 small codegree}. For this, we assume that $\xi$ is a sufficiently small constant, $C\ge 1$ is a fixed constant, and $n$ is sufficiently large relative to $\xi$ and~$C$. Let~$\e = n^{-\xi}$. By our choice of $\xi$ and $n$, we have $\e \le 1/20$. Assume that we are given an edge-colored multigraph $G$ with maximum degree at most $n$ and multiplicity at most $C$ such that there are exactly $(1 + \e)n$ edges of each color and at most $C$ edges of the same color are incident to any vertex. Let $\mathscr{C}$ denote the set of all colors that appears in $G$, i.e., $\mathscr{C} = \mathscr{C}(G)$. 

We now follow a procedure that constructs a rainbow matching using almost all the colors over several iterations. Each iteration is executed using a randomized algorithm, and we can show that we reach the desired state with positive probability. Then we fix that choice of desirable state to analyze the next iteration. This procedure is in terms of some parameters $\h, \d, \t, \g > 0$, which we fix as follows until the end of Subsection~\ref{sec:formal analysis}. 
\begin{equation} \label{epdeleta}
\h = \e^2, \;\;\;\;\;\; \d = \e^{20}, \;\;\;\;\;\; \t = \frac{1}{\d}, \;\;\;\;\;\; \g = \frac{1}{1+\e}.
\end{equation} 
By our choice of $\xi$ and $n$, we have $\d \le \h \le \e \le \g \le 1$. To track the number of edges in each color and the degree of each vertex, we now introduce two functions $s,g:[0,1) \rightarrow [0,1)$ that satisfy $s(x) = (1-\g x)^2$ and $g(x) = (1-x)(1 - \g x)$ for every $x\in [0,1]$. The readers are advised to consult the next subsection to see an intuitive explanation of why these functions are chosen as above. In what follows, we will always assume that $(1-\h)\t$ is an integer since all our arguments can be repeated with $(1-\h)\t$ replaced by $\floor{(1-\h)\t}$.

We will make sure the following happen after the $t$-th iteration for every $t\in \{0\}\cup [(1-\h)\t]$. 
\stepcounter{propcounter}
\begin{enumerate}[leftmargin=*,label = {\bfseries \emph{\Alph{propcounter}}}]
\item \label{inductive property} There is a set $\mathscr{C}_t \subseteq \mathscr{C}$ such that there is a $(\mathscr{C}\setminus \mathscr{C}_t)$-rainbow matching $M_t$ of $G$, and there is a subgraph $G_t$ of $G$ with $V(G_t)\cap V(M_t) = \emptyset$ and $\mathscr{C}(G_t)\subseteq \mathscr{C}_t$ that satisfies the following two properties. 
\end{enumerate}
\begin{enumerate}[leftmargin=*,label = {\bfseries \emph{\Alph{propcounter}\arabic{enumi}}}]
\item \label{p1} For every $c\in \mathscr{C}_t$, the number of edges with color $c$ in $G_t$ is exactly $s_t := s(t\d) (1+\e)n - t\d^{5/4} n$. 
\item \label{p2} For every vertex $v\in V(G_t)$, the degree of $v$ in $G_t$ is at most $d_t := g(t\d) n + t\d^{3/2} n$.
\end{enumerate}

To prove \Cref{th1 small codegree}, our strategy is to show that \ref{inductive property} holds for $t=(1-\h)\t$. Then, using this, we will prove that the multigraph $G_{(1-\h)\t}$ still has so many edges left in each remaining color relative to the maximum degree of $G_{(1-\h)\t}$ that we can extend the matching $M_{(1-\h)\t}$ to a rainbow matching of $G$ using every color in $\mathscr{C}$ via a straightforward application of \Cref{useAlon} on the multigraph $G_{(1-\h)\t}$. 

For convenience, we now introduce two error-tolerance sequences $\a_t = \frac{\g t\d^{5/4}}{s(t\d)}$ and $\b_t = \frac{t\d^{3/2}}{g(t\d)}$ for every $t\in \{0\}\cup [(1-\h)\t]$. Thus, we have the following. 
\begin{equation} \label{eq:s_t and d_t}
s_t = (1-\a_t) s(t\d) (1+\e)n \;\;\;\;\;\; \text{and} \;\;\;\;\;\; d_t = (1+\b_t) g(t\d) n.
\end{equation}

Note that by taking $\mathscr{C}_0 = \mathscr{C}$ and $G_0 = G$, we ensure that \ref{inductive property} holds for $t=0$. Our strategy is to inductively show that \ref{inductive property} holds for every $t\in \{0\}\cup [(1-\h)\t]$. For this, we first fix some $t$ with $0\le t< (1-\h)\t$ and assume that \ref{inductive property} holds for $t$ for a given set $\mathscr{C}_t \subseteq \mathscr{C}$, a $(\mathscr{C}\setminus \mathscr{C}_t)$-rainbow matching $M_t$ of $G$, and a subgraph $G_t$ of $G$. Our goal is now to prove that \ref{inductive property} holds for $t+1$. We will next provide a randomized algorithm that outputs a set $\mathscr{C}_{t+1}\subseteq \mathscr{C}$, a $(\mathscr{C}\setminus \mathscr{C}_{t+1})$-rainbow matching $M_{t+1}$ in $G$, and a subgraph $G_{t+1}$ of $G$ such that with respect to these objects, \ref{inductive property} holds for $t+1$ with a positive probability. Importantly, this will also show that there are some deterministic choices for these objects so that \ref{inductive property} holds for $t+1$. 

\begin{enumerate}[leftmargin=*,label = {\bfseries Step~\arabic{enumi}}]
\item \label{step1} We activate each color in $\mathscr{C}_t$ independently with probability $\T_t=\frac{\d}{1-t\d}$. (This probability is picked to ensure that the number of colors activated in each iteration is roughly equal to $\delta |\mathscr{C}|$.)
\item \label{step2} We select independently one edge in $G_t$ of each activated color uniformly at random. We denote by $T$ the set of all selected edges.
\item \label{step3} We delete all the vertices corresponding to the edges in $T$ from $G_t$. Deleting vertices from a multigraph also deletes all incident edges from it.
\end{enumerate}

\noindent\textbf{Calculation of maximum probability by which a vertex is deleted in Step 3.} The probability that a fixed edge in $G_t$ is selected in \ref{step2} is exactly $\frac{\T_t}{s_t}$ because the corresponding color gets activated with probability $\T_t$ and if a color is activated, then each of the $s_t$ edges of that color gets selected with the same probability. For all $c \in \mathscr{C}_t$ and $v\in V(G_t)$, denote by $d_c(v)$ the number of edges in $G_t$ incident to $v$ that are of color $c$. Thus, the probability that $v$ is deleted in \ref{step3} is given by $p'_v = 1-\prod_{c \in \mathscr{C}_t} \left(1-d_c(v)\cdot \frac{\T_t}{s_t}\right)$. Since $\sum_{c \in \mathscr{C}_t} d_c(v)$ is equal to $d_{G_t}(v)$, it is easy to see that
\begin{equation} \label{inequality_pv} 
p'_v \le \frac{d_{G_t}(v) \T_t}{s_t}.
\end{equation} 
This together with $d_{G_t}(v)\le d_t$ implies that $p'_v \le d_t \cdot \frac{\T_t}{s_t} = \frac{\d \cdot (1+\b_t) g(t\d) n}{(1-t\d) \cdot (1-\a_t) s(t\d) (1+\e)n} = \frac{\g (1+\b_t) g(t\d) \d}{(1-\a_t) (1-t\d) s(t\d)} =: a_t$. 

\begin{enumerate}[leftmargin=*,label = {\bfseries Step~\arabic{enumi}}]
\setcounter{enumi}{3}
\item \label{step4} We delete each vertex $v\in V(G_t)\setminus V(T)$ independently with probability $p_v$ so that $p'_v + (1-p'_v)p_v = a_t$. Thus, $a_t$ is the total probability by which each vertex of $G_t$ gets deleted among \ref{step3} and \ref{step4} combined. We denote by $B$ all vertices in $V(G_t)$ that are deleted during \ref{step3} and \ref{step4}. (We will define the multigraph $G_{t+1}$ such that $V(G_{t+1}) = V(G_t)\setminus B$.)
\item \label{step5} We denote by $T'$ the set of all edges $e \in T$ such that $e$ is not incident to any edge $e' \in T$ with $e'\neq e$. We now define the matching $M_{t+1}$ to be the union of $M_t$ and $T'$. Let $\mathscr{C}'$ denote the set of all colors that appear in some edge in $T'$. We define $\mathscr{C}_{t+1} = \mathscr{C}_t \setminus \mathscr{C}'$. It is clear that $M_{t+1}$ is $(\mathscr{C}\setminus \mathscr{C}_{t+1})$-rainbow. We then delete all edges from $G_t$ that do not have a color from $\mathscr{C}_{t+1}$.
\item \label{step6} For each color $c\in \mathscr{C}_{t+1}$, if the number of edges of color $c$ is more than $s_t$ after \ref{step5}, then we delete arbitrary edges of that color to ensure that there are exactly $s_t$ edges of color $c$. We finally define $G_{t+1}$ to be the multigraph with vertex set $V(G_t)\setminus B$ where the edge set of $G_{t+1}$ is the set of all survived edges of color in $\mathscr{C}_{t+1}$. It is clear that $V(G_{t+1})\cap V(M_{t+1}) = \emptyset$ and $\mathscr{C}(G_{t+1})\subseteq \mathscr{C}_{t+1}$.
\end{enumerate}

To this end, the only thing remaining is to show that \ref{p1} and \ref{p2} hold for $t+1$ with a positive probability. Before diving into the formal analysis of our randomized algorithm, we devote the next subsection to providing a heuristic for what to expect from the algorithm and giving justifications as to why the functions $s$ and $g$ were chosen in the particular ways. However, it is possible to skip the next subsection if the reader is only interested in formal proofs of \Cref{th1 small codegree,th1}.

%%%%%%%%%%%%%%%%%%%%%%%%%%%%%%%%%%%%%%%%%%%%%%%%%%%%%%%%%%%%
\subsection{Intuitive analysis}
In this subsection, we provide a rough, non-rigorous analysis of the randomized algorithm described in the previous subsection. In particular, we will assume that $s,g:[0,1)\rightarrow [0,1)$ are two unknown functions and aim to deduce what good choices for them are by heuristically analyzing our algorithm. We will also ignore the error-tolerance sequences defined in the last subsection and assume \eqref{eq:s_t and d_t} with $\a_t = \b_t = 0$. Thus, we assume the following instead of \ref{p1} and \ref{p2} after the $t$-th iteration.

\begin{enumerate}[leftmargin=*,label = {\bfseries \emph{\Alph{propcounter}\arabic{enumi}}*}]
\item\label{colorclass} For every $c\in \mathscr{C}_t$, the number of edges with color $c$ is $s_t = s(t\d) (1+\e)n$.
\item\label{degreevertex} For every vertex $v\in V(G_t)$, the degree $d_{G_t}(v) = g(t\d) d_G(v)$. 
\end{enumerate}

Clearly, we must have $s(0) = 1$ and $g(0) = 1$. To this end, we track the evolution of the number of edges in each color and the degrees of the vertices as the iterations progress. We do that by modeling these parameters before and after the $(t+1)$-st iteration using a system of differential equations.

Since we set $\a_t = \b_t = 0$, the probability that every vertex $v\in V(G_t)$ gets deleted among \ref{step3} and \ref{step4} is $a_t = \frac{\g g(t\d) \d}{(1-t\d)s(t\d)}$. Thus, the probability that a pair of vertices gets deleted is about $a_t^2$, which we ignore in this intuitive analysis as it is a strictly lower-order term than $a_t$. Hence, for every $c\in \mathscr{C}_t$, the expected number of edges deleted in color $c$ among \ref{step3} and \ref{step4} is approximately $2 a_t s_t = \frac{2\g g(t\d) \d}{(1-t\d)} \cdot (1+\e) n$. This suggests the following behavior.

\begin{equation} \label{eq1}
s'(x) = -2 \g \cdot \frac{g(x)}{1-x}.
\end{equation}

Next, let us estimate the difference $d_{G_{t+1}}(v) - d_{G_t}(v)$ for $v\in V(G_{t+1})$ to get a differential equation for the function $g$. We know that each neighbor of $v$ is deleted among \ref{step3} and \ref{step4} with probability~$a_t$. On the other hand, any edge $e$ incident to $v$ is deleted in \ref{step5} by probability around $\T_t$ because the color of $e$ is activated with this probability in \ref{step1}, and there is a relatively low chance of conflicts in \ref{step5}. Thus, neglecting small error terms due to lack of independence, we expect the following to hold approximately. For convenience, we define a function $h:[0,1) \rightarrow [0,1)$ such that $h(x) = \frac{g(x)}{1-x}$.

\begin{align*} 
d_{G_{t+1}}(v) - d_{G_t}(v) &= - \left(a_t + (1 - a_t)\T_t\right) d_{G_t}(v) \\
g((t+1)\d) d_G(v) - g(t\d) d_G(v) &= - \frac{\d}{1-t\d} \cdot g(t\d) d_G(v) - a_t \left(1 - \frac{\d}{1-t\d}\right) g(t\d) d_G(v) \\
(1 - (t+1)\d) h((t+1)\d) - (1 - t\d) h(t\d) &= - \d h(t\d) - \frac{\g h(t\d)\d}{s(t\d)} \cdot (1 - (t+1)\d) h(t\d) \\
\frac{h((t+1)\d) - h(t\d)}{\d} &= -\g \cdot \frac{h(t\d)^2}{s(t\d)}. 
\end{align*}
The above suggests the following.
\begin{equation} \label{eq2}
h'(x) = -\g \cdot \frac{h(x)^2}{s(x)}.
\end{equation} 

By Equations \eqref{eq1} and \eqref{eq2}, we have that $\frac{dh}{ds} = \frac{h}{2s}$, whose solution is $s = \mathrm{c}h^2$ for some constant $\mathrm{c}$. By the initial conditions that $s(0) = g(0) = h(0) = 1$, we get that $s = h^2$. Now, Equation \eqref{eq2} implies that $h'(x) = -\g$. Solving this with the initial condition $h(0) = 1$, we obtain that $h(x) = 1 - \g x$. Hence, $s(x) = (1 - \g x)^2$ and $g(x) = (1-x)(1 - \g x)$. This completes our justification for why the functions $s$ and $g$ were chosen as they were in the last subsection.

We show in the next subsection that the heuristic analysis in this subsection for the degree of each vertex and the number of edges of each color can be made rigorous using concentration inequalities, provided that $t< (1-\h)\t$. Moreover, assuming \ref{colorclass} and \ref{degreevertex} after the $(1-\h)\t$-th iteration, the number of edges in $G_{(1-\h)\t}$ in each color in $\mathscr{C}_{(1-\h)\t}$ is about $(1-\g(1-\h))^2 (1+\e)n$, and the maximum degree of $G_{(1-\h)\t}$ is at most about $\h(1-\g(1-\h)) n$. As long as $(1-\g(1-\h))^2 (1+\e)n \gg 4e \cdot \h(1-\g(1-\h)) n$, we can extend the $(\mathscr{C}\setminus \mathscr{C}_{(1-\h)\t})$-rainbow matching $M_{(1-\h)\t}$ to a rainbow matching using every color in $G$ by including an edge from each color in $\mathscr{C}_{(1-\h)\t}$ by using \Cref{useAlon}. This inequality is indeed true because $\h = \e^2$ and $\xi$ is sufficiently small and $n$ is sufficiently large relative to $\xi$. This argument will be carefully done at the end of the next subsection to conclude the proof of \Cref{th1 small codegree}.

%%%%%%%%%%%%%%%%%%%%%%%%%%%%%%%%%%%%%%%%%%%%%%%%%%
\subsection{Formal analysis} \label{sec:formal analysis}
In this subsection, we formally analyze our randomized algorithm from Subsection~\ref{sec:algorithm} and finish the proof of \Cref{th1 small codegree}. Recall that the algorithm carries out the $(t+1)$-st iteration for some $0\le t < (1-\h)\t$ and our goal is to show that after executing the algorithm, with positive probability, \ref{p1} and \ref{p2} hold for $t+1$. Throughout this subsection, we assume that $\xi$ is a sufficiently small constant, $C\ge 1$ is a fixed constant, and $n$ is sufficiently large relative to $\xi$ and $C$. We start with observing that the error terms $\a_t$ and $\b_t$ are small.

\begin{observation} \label{clm:alpha beta}
For every $t\in \{0\}\cup [(1-\h)\t]$, we have $\a_t\le 1/4$ and $\b_t \le 1/4$.
\end{observation}

\begin{proof}
Using the definitions of $\a_t$ and $\b_t$, the parameter specifications in \eqref{epdeleta}, the fact that $t\le (1-\h)\t$, and that $n$ is sufficiently large relative to $\xi$, we have
\[
\a_t = \frac{\g t\d^{5/4}}{s(t\d)} \le \frac{(1-\h)\d^{1/4}}{(1-\g(1-\h))^2} \le 1/4
\;\;\;\; \text{and} \;\;\;\;
\b_t = \frac{t\d^{3/2}}{g(t\d)} \le \frac{(1-\h)\d^{1/2}}{\h (1-\g(1-\h))} \le 1/4.
\]
\end{proof}

The next lemma shows that \ref{p1} holds for a fixed color w.v.h.p. 

\begin{lemma} \label{lem2}
For every $c\in \mathscr{C}_t$, the number of edges in $G_t$ of color $c$ survived after \ref{step4} is at least $s_{t+1}$ w.v.h.p. Consequently, for every $c\in \mathscr{C}_{t+1}$, the number of edges in $G_{t+1}$ of color $c$ is exactly $s_{t+1}$ w.v.h.p. 
\end{lemma}

\begin{proof}
Fix a color $c\in \mathscr{C}_t$. Remember that by assumption \ref{p1}, the number of edges in $G_t$ of color $c$ is $s_t$. We will first show the following claim. 

\begin{claim} \label{clm: inside lem2}
The number of edges in $G_t$ of color $c$ deleted during \ref{step3} and \ref{step4} combined is at most $2a_t s_t + \sqrt{n} \log n$ w.v.h.p. 
\end{claim}

\begin{proof}
Let $L$ denote the number of edges of color $c$ that are removed during \ref{step3} and \ref{step4}. Since during \ref{step3} and \ref{step4}, every vertex of $G_t$ gets deleted with probability $a_t$, every edge of $G_t$ gets deleted with probability at most $2a_t$. Thus, $\E(L) \le 2a_ts_t$. Note that
\begin{equation} \label{eq:a_t s_t}
a_t s_t = \frac{\g (1+\b_t) g(t\d) \d}{(1-\a_t) (1-t\d) s(t\d)} \cdot (1-\a_t) s(t\d) (1+\e)n = (1+\b_t)(1-\g t\d) \d n.
\end{equation}
Using this along with \Cref{clm:alpha beta}, we obtain $\E(L)\le 2a_t s_t\le 3n$. To show the concentration of $L$, we use Talagrand's inequality. The random variable $L$ can be seen as a function with domain $\Omega$, where $\Omega=\prod_{\mathrm{c} \in \mathscr{C}_t} \Omega_{\mathrm{c}} \times \prod_{v\in V(G_t)} \Omega_v$ with $\Omega_{\mathrm{c}}$ and $\Omega_v$ denoting the probability space for the decision made for the color $\mathrm{c}$ in \ref{step1} and \ref{step2} and the vertex $v$ in \ref{step4}, respectively. If the decision for one of the $\Omega_{\mathrm{c}}$'s or $\Omega_v$'s is altered, then $L$ will be affected by at most $2C$. Thus, $L$ is $2C$-Lipschitz. Furthermore, observe that $L$ is $1$-certifiable. Indeed, for any edge $e$ of color $c$ removed during \ref{step3} and \ref{step4}, there must either be another edge incident to $e$ in $T$ or one of the vertices in $e$ is deleted in Step 4, and these events certify the removal of $e$. Let $\l = \sqrt{n} \log n$. Using \Cref{talagrand} and the fact that $\E(L)\le 3n$, we conclude that the probability that $L$ deviates from its mean by $\l > \frac{\l}{2} + 20 \cdot 2C \cdot \sqrt{\E(L)} + 64\cdot (2C)^2$ is at most $4 \exp\left(-\frac{(\l/2)^2}{8 (2C)^2 (\E(L) + \l/2)}\right) = \exp(-\omega(\log n))$. This finishes the proof of \Cref{clm: inside lem2}.
\end{proof}

Now, using \eqref{eq:a_t s_t} and $t<(1-\h)\t$ and the parameter relations in \eqref{epdeleta} and \ref{p1}, we have the following. 
\begin{align*} 
s_t - 2a_t s_t - \sqrt{n}\log n 
&= s(t\d)(1+\e)n - t\d^{5/4}n - 2(1+\b_t)(1-\g t\d)\d n - \sqrt{n}\log n \\
&= \left((1 - \g t\d)^2 - 2\g\d (1 - \g t\d)\right) (1+\e)n - t\d^{5/4}n - 2\b_t (1-\g t\d) \d n - \sqrt{n}\log n \\
&= \left((1 - \g\d(t+1))^2 - \g^2\d^2\right) (1+\e)n - t\d^{5/4}n - \frac{2t\d^{3/2}\cdot \d n}{1-t\d} - \sqrt{n}\log n \\
&\ge s((t+1)\d)\cdot (1+\e)n - \d^2 n - t\d^{5/4}n - \frac{2\d^{3/2}n}{\h} - \d^2 n \\
&\ge s((t+1)\d)\cdot (1+\e)n - t\d^{5/4}n - \d^{5/4} n 
\;\;\;\;\;\; = s_{t+1}.
\end{align*}
This together with \Cref{clm: inside lem2} implies \Cref{lem2}.
\end{proof}

The next lemma shows that \ref{p2} holds for a fixed vertex w.v.h.p. 

\begin{lemma} \label{lem3}
For every vertex $v\in V(G_t)$, w.v.h.p. it holds that if $v\in V(G_{t+1})$, then $d_{G_{t+1}}(v)\le d_{t+1}$. 
\end{lemma}

\begin{proof}
First of all, keep in mind that \ref{step6} can only decrease the degrees of any vertex; thus, it is enough to prove \Cref{lem3} before executing \ref{step6}. Throughout the proof of this lemma, to simplify our analysis, we use an alternative implementation of our algorithm that produces the same outcomes (more precisely, the outcomes have the same probability distribution under both processes). We will do the first four steps in a different order, namely \ref{step2} $\rightarrow$ \ref{step4} $\rightarrow$ \ref{step1} $\rightarrow$ \ref{step3}. This can be done because these steps can essentially be done independently. We next elaborate on the exact process executed in this way. 
\begin{enumerate}[leftmargin=*,label = {\bfseries Step~\arabic{enumi}*}]
\item \label{step1'} For every color $c\in \mathscr{C}_t$, we select independently one edge of color $c$ in $G_t$ uniformly at random (instead of just for the activated colors as originally described in our algorithm). We denote by $H$ the multigraph with vertex set $V(G_t)$ and edge set containing all selected edges in this step.
\item \label{step2'} We independently select each vertex $v\in V(G_t)$ with probability $p_v$, where $p_v$ is given as in \ref{step4}. We denote by $V$ the set of all selected vertices.
\item \label{step3'} We activate each color in $\mathscr{C}_t$ independently with probability $\T_t = \frac{\d}{1-t\d}$. For each activated color $c$, we pick the edge from $H$ of color $c$. We denote by $T$ the set of all picked edges.
\item \label{step4'} We delete all the vertices in $V\cup V(T)$. (Here, $V(T)$ denotes the set of all vertices used in $T$.) 
\end{enumerate}
Then, \ref{step5} and \ref{step6} are done as described in the original algorithm. The added benefit to performing the steps in the above order is that, typically, the multigraph $H$ in \ref{step1'} has low maximum degree. Therefore, appropriately conditioned on this, when we activate colors in \ref{step3'}, we obtain a good certifiability constant for the application of Talagrand's inequality.

For every $u,v\in V(G_t)$, let $m(u,v)$ denote the number of parallel edges in $G_t$ between $u$ and $v$. In particular, if $u,v$ is non-adjacent in $G_t$, then $m(u,v)=0$. Note that $d_{G_t}(v) = \sum_{u\in N_{G_t}(v)} m(u,v) = \sum_{u\in V(G_t)} m(u,v)$. Let $X_v = \sum_{u\in V} m(u,v)$ and $L_v = \sum_{u\in V(T)\setminus V} m(u,v)$ and $L'_v$ denote the number of edges $uv$ incident to $v$ such that $u\notin V\cup V(T)$ but the color of the edge $uv$ belongs to the set $\mathscr{C}'$ in \ref{step5}. For every $v\in V(G_t)$, we define $D_v = d_{G_t}(v) - (X_v + L_v + L'_v)$. Observe that if $v\in V(G_{t+1})$, then $d_{G_{t+1}}(v) \le D_v$. Thus, to prove \Cref{lem3}, it suffices to show that for every vertex $v\in V(G_t)$, w.v.h.p. $D_v\le d_{t+1}$. 

To this end, we fix a vertex $v\in V(G_t)$ in the remainder of this proof. By our assumption \ref{p2}, we have $d_{G_t}(u) \le d_t$ for every $u\in V(G_t)$. We start by collecting a few w.v.h.p. events in the following claim that will be helpful to prove \Cref{lem3}. Recall that we say a vertex $u\in V(G_t)$ meets a color $c\in \mathscr{C}_t$ if there is an edge in $G_t$ incident to $u$ that is colored with $c$. Define $U$ to be the set of all vertices $u\in V(G_t)$ such that $u$ and $v$ meet some common color. (Notice that $N_{G_t}(v)\subseteq U$). Define $S$ to be the set of all edges $uv\in E(G_t)$ incident to $v$ such that the edge in $H$ with the same color as $uv$ is incident to $u$. Let $\overline{V}$ denote the set $V(G_t)\setminus V$. (For the following claim, recall $p_u$ as defined in \ref{step4}.) 

\begin{claim} \label{nice_events}
The following events hold w.v.h.p.
\stepcounter{propcounter}
\begin{enumerate}[leftmargin=*,label = {\bfseries \emph{\Alph{propcounter}\arabic{enumi}}}]
\item \label{F_1} $d_H(u) \le \log^2 n$ for every vertex $u \in U$.
\item \label{F_2} $|S| \le \log^2 n$.
\item \label{F_3} $\sum_{u\in \overline{V}} m(u,v) \le \sum_{u\in V(G_t)} m(u,v) (1-p_u)  + \d^{7/4} n$.
\item \label{F_4} $\T_t \sum_{u\in \overline{V}} m(u,v) d_H(u) \ge \frac{\T_t}{s_t} \sum_{u \in V(G_t)} m(u,v) (1-p_u) d_{G_t}(u) - \d^{7/4} n$.
\end{enumerate}
\end{claim}

\begin{proof}
By \Cref{clm:alpha beta} and the fact that $g(x)\le s(x)$ for all $x\in [0,1)$, we deduce that $d_t\le 2s_t$. This inequality will be used a few times in this proof.

To see that \ref{F_1} holds w.v.h.p., first observe that each edge of $G_t$ is selected in \ref{step1'} with probability exactly $\frac{1}{s_t}$. Thus, for each vertex $u\in V(G_t)$, we have $\E(d_H(u)) \le \frac{d_t}{s_t} \le 2$. Clearly, $d_H(u)$ is a sum of independent Bernoulli random variables $B_c$ with $c\in \mathscr{C}_t$, where $B_c = 1$ if and only if there is an edge of color $c$ in $H$ incident to $u$. Thus, using the moreover part of Chernoff bound, we have that $\P[d_H(u) \ge \log^2 n] \le \exp(-\log^2 n)$. By definition of $U$, we have $|U| \le d_{G_t}(v)\cdot 2s_t \le 2(1+\e)n^2$. Now, by the union bound over all vertices in $U$, we conclude that \ref{F_1} holds w.v.h.p. 

For \ref{F_2}, note that for every $u\in V(G_t)$ and $c\in \mathscr{C}_t$, the probability that there is an edge of color $c$ in $H$ incident to $u$ is at most $\frac{C}{s_t}$. Thus, $\E(|S|) \le d_t \cdot \frac{C}{s_t} \le 2C$. Clearly, $|S|$ is a random variable with domain $\Omega = \prod_{c \in \mathscr{C}_t} \Omega_c$ where $\Omega_c$ denotes the probability space for the chosen random edge of color $c$ at \ref{step1'}. It is easy to check that $|S|$ is $2C$-Lipschitz and $1$-certifiable. Thus, setting $\l = \frac{1}{2}\log^2 n$ and applying \Cref{talagrand}, we conclude that $|S|$ deviates from its expectation by at least $\l > \l/2 + 20\cdot 2C\cdot \sqrt{\E(|S|)} + 64\cdot (2C)^2$ is at most $4\exp\left(-\frac{(\l/2)^2}{8 (2C)^2 (\E(|S|) + \l/2)}\right) = \exp(-\omega(\log n))$. Thus, w.v.h.p. it holds that $|S| \le 2C + \frac{1}{2}\log^2 n \le \log^2 n$, i.e., \ref{F_2} holds. 

For \ref{F_3}, recall that $X_v = \sum_{u\in V} m(u,v)$ and thus, $X_v + \sum_{u\in \overline{V}} m(u,v) = \sum_{u\in V(G_t)} m(u,v)$. Thus, to prove \ref{F_3} holds w.v.h.p., it suffices to show that w.v.h.p. $X_v \ge \sum_{u\in V(G_t)} p_u m(u,v) - \d^{7/4} n$. To do so, note that using \Cref{clm:alpha beta}, we have 
\[
\E(X_v) = \sum_{u \in V(G_t)} p_u m(u,v) \le d_t = (1+\b_t)g(t\d)n \le 2n.
\] 
Clearly, $X_v$ is a random variable with domain $\Omega = \prod_{u \in V(G_t)} \Omega_u$ where $\Omega_u$ denotes the probability space for whether the vertex $u$ is selected in $V$ at \ref{step2'}. Note that $X_v$ is $C$-Lipschitz and $1$-certifiable. Thus, setting $\l = \sqrt{n}\log n$ and applying \Cref{talagrand}, we conclude that $X_v$ deviates from its expectation by at least $\l > \l/2 + 20C \sqrt{\E(X_v)} + 64C^2$ is at most $4\exp\left(-\frac{(\l/2)^2}{8 C^2 (\E(X_v) + \l/2)}\right) = \exp(-\omega(\log n))$. This, together with the fact that $\sqrt{n}\log n \le \d^{7/4} n$, shows that \ref{F_3} holds w.v.h.p.

For \ref{F_4}, we first consider the random variable $Y= \sum_{u \in V(G_t)} m(u,v) (1-p_u) d_H(u)$. Using \Cref{clm:alpha beta}, we have
\[
\E(Y) = \sum_{u \in V(G_t)} m(u,v) (1-p_u) \frac{d_{G_t}(u)}{s_t} \le \frac{d_t^2}{s_t} \le 2d_t = 2(1+\b_t) g(t\d) n \le 3n.
\]
Clearly, $Y$ is a random variable with domain $\Omega = \prod_{c \in \mathscr{C}_t} \Omega_c$ where $\Omega_c$ denotes the probability space for the chosen random edge of color $c$ at \ref{step1'}. It is easy to check that $Y$ is $2C$-Lipschitz. Note that using \Cref{clm:alpha beta} and $t< (1-\h)\t$ and the parameter relations in \eqref{epdeleta}, for every $u\in V(G_t)$, we have 
\[
p_u \le a_t = \frac{\g (1+\b_t) g(t\d) \d}{(1-\a_t) (1-t\d) s(t\d)} \le \frac{2\d}{1-\g(1-\h)} \le \frac{2\d}{\h} \le \frac{1}{2}.
\]
Thus, $Y$ is $2$-certifiable. To see this, observe that if $Y\ge s$, then there must exist a set of at most $2s$ colors $c\in \mathscr{C}_t$ such that the selected edge in $H$ of color $c$ is incident to some vertex $u\in N_{G_t}(v)$ and these selected edges certify $Y\ge s$. Thus, setting $\l = \sqrt{n}\log n$ and applying \Cref{talagrand}, we conclude that $Y$ deviates from its expectation by at least $\l > \l/2 + 20\cdot 2C\cdot \sqrt{2\E(Y)} + 64\cdot (2C)^2\cdot 2$ is at most $4\exp\left(-\frac{(\l/2)^2}{8\cdot (2C)^2\cdot 2\cdot (\E(Y)+\l/2)}\right) = \exp(-\omega(\log n))$. Thus, the following holds w.v.h.p. 
\begin{equation} \label{F'}
Y \ge \frac{1}{s_t} \sum_{u \in V(G_t)} m(u,v) (1-p_u) d_{G_t}(u) - \sqrt{n} \log n.
\end{equation}

Since both \ref{F_1} and \eqref{F'} hold w.v.h.p., it is enough to prove that \ref{F_4} occurs w.v.h.p. conditioned on the event that both \ref{F_1} and \eqref{F'} occur. For this, fix any choice of $H$ in \ref{step1'} such that \ref{F_1} and \eqref{F'} hold. Let $X = \sum_{u \in \overline{V}} m(u,v) d_H(u)$. Observe the following using \ref{F_1}. 
\[
\E(X\; | H) = \sum_{u \in V(G_t)} m(u,v) (1-p_u) d_H(u) \le d_t \log^2 n \le 2n\log^2 n.
\]
The random variable $X$, conditioned on $H$, has domain $\Omega=\prod_{u \in V(G_t)} \Omega_u$ with $\Omega_u$ denoting the probability space for whether the vertex $u$ is selected in $V$ at \ref{step2'}. If the decision for one of the $\Omega_u$'s is altered, then $X$ will be affected by at most $C\log^2 n$ due to \ref{F_1}. Thus, $X$ is $(C\log^2 n)$-Lipschitz. Furthermore, it is easy to see that $X$ is $1$-certifiable. Hence, setting $\l = \frac{1}{2}\sqrt{n}\log^4 n$ and using \Cref{talagrand}, we conclude that the probability that $X$, conditioned on $H$, deviates from its mean by $\l > \l/2 + 20 \cdot C\log^2 n \cdot \sqrt{\E(X\; |H)} + 64\cdot (C\log^2 n)^2$ is at most $4 \exp\left(-\frac{(\l/2)^2}{8 \cdot C\log^2 n \cdot (\E(X\; |H)+\l/2)}\right) = \exp(-\omega(\log n))$. This, together with \eqref{F'}, gives us the following.
\[
\P\left[X \le \frac{1}{s_t}\sum_{u \in V(G_t)} m(u,v) (1-p_u) d_{G_t}(u) - \sqrt{n}\log^4 n \;\Big| H\right] = \exp(-\omega(\log n)).
\]
Since the above holds for any choice of $H$ satisfying \ref{F_1} and \eqref{F'}, using $\T_t \sqrt{n}\log^4 n \le \frac{\d \sqrt{n} \log^4 n}{\h} \le \d^{7/4} n$, we conclude that \ref{F_4} holds w.v.h.p. This finishes the proof of \Cref{nice_events}.
\end{proof}

We now let $\F$ denote the event that \ref{F_1}--\ref{F_4} hold simultaneously. In the following lemma, we condition on $\F$ to establish a w.v.h.p. upper bound on $D_v = d_{G_t}(v) - (X_v + L_v + L'_v)$. 

\begin{claim} \label{expect}
Conditioned on $\F$, w.v.h.p. we have $D_v \le d_t(1-a_t) \left(1 - \T_t\right) + 4\d^{7/4} n$. 
\end{claim}

\begin{proof}
Fix any choice of $H$ in \ref{step1'} and $V$ in \ref{step2'} such that $\F$ holds. Recall the definition of $S$ and $\overline{V}$ mentioned before \Cref{nice_events}. Define $S'$ to be the set of all edges $uv\in V(G_t)$ incident to $v$ such that $u\in \overline{V}$ and $uv\notin S$. For every edge $uv\in S'$, let $\mathcal{E}_{uv}$ denote the event that one of the edges in $H$ incident to $u$ is picked in \ref{step3'} (i.e., $u\in V(T)$), and $\mathcal{E}'_{uv}$ denote the event that the color of $uv$ belongs to the set $\mathscr{C}'$ in \ref{step5}. Note that 
\begin{equation} \label{eq: L_v and L'_v}
\E(L_v\;| H, V) \ge \sum_{uv\in S'} \P[\mathcal{E}_{uv}] \;\;\;\;\;\; \text{and} \;\;\;\;\;\; \E(L'_v\;| H, V) \ge \sum_{uv\in S'} \P[\overline{\mathcal{E}_{uv}} \cap \mathcal{E}'_{uv}].
\end{equation}
Observe that $\P[\mathcal{E}_{uv}]\ge 1-\left(1-\T_t\right)^{d_H(u)} \ge d_H(u)\T_t - d_H(u)^2\T_t^2$. To obtain a lower bound on $\P[\overline{\mathcal{E}_{uv}} \cap \mathcal{E}'_{uv}]$, first fix an edge $uv\in S'$ and then let $e$ denote the edge in $H$ that has the same color as $uv$. (Note that $e$ is not incident to $u$ since $uv\in S'$.) Then, the event $\overline{\mathcal{E}_{uv}} \cap \mathcal{E}'_{uv}$ is equivalent to the following event that
\begin{itemize}
\item the color of $uv$ is activated in \ref{step3'} and 
\item none of the colors used in the edges in $H$ incident to $u$ or $e$ is activated in \ref{step3'}. 
\end{itemize}
Observe that the first of the above events and the event that none of the edges incident to $e$ is picked in \ref{step3'} ensure that $e$ is added to $T'$ and also to the matching $M_{t+1}$ in \ref{step5}; hence, the color of $uv$ belongs to $\mathscr{C}'$ in \ref{step5}. Thus, we have $\P[\overline{\mathcal{E}_{uv}} \cap \mathcal{E}'_{uv}] \ge \T_t\left(1- \T_t\right)^{3\log^2 n}$, because the number of edges in $H$ that are incident to $u$ or $e$ is at most $3\log^2 n$ (due to \ref{F_1}). Hence, using \eqref{eq: L_v and L'_v}, we have

\begin{align} 
\E(L_v + L'_v \;| H, V) &\ge \sum_{uv\in S'} \left(\P[\mathcal{E}_{uv}] + \P[\overline{\mathcal{E}_{uv}} \cap \mathcal{E}'_{uv}]\right) \nonumber \\
&\ge \sum_{u \in \overline{V}} m(u,v) \cdot \left(d_H(u)\T_t - d_H(u)^2\T_t^2 + \T_t \left(1-\T_t\right)^{3\log^2 n}\right) -\sum_{uv\in S} 1 \nonumber \\
&\ge \sum_{u \in \overline{V}} m(u,v) \cdot \left(d_H(u)\T_t - \log^4 n \cdot \T_t^2 + \T_t - 3\log^2 n\cdot \T_t^2\right) -|S| \label{size_lu} \\
&\ge \sum_{u \in \overline{V}} m(u,v) \cdot \left(\T_t + d_H(u)\T_t\right) - d_t \T_t^2 (\log^4 n + 3\log^2 n) - \log^2 n \label{size_z} \\
&\ge \sum_{u \in \overline{V}} m(u,v) \cdot \left(\T_t + d_H(u)\T_t\right) - \d^{7/4} n. \label{dtheta}
\end{align}
In the above, \eqref{size_lu} uses the fact that $d_H(u)\le \log^2 n$ (due to \ref{F_1}). In \eqref{size_z}, we use $\sum_{u\in \overline{V}} m(u,v)\le d_{G_t}(v)\le d_t$ and $|S|\le \log^2 n$ (due to \ref{F_2}). In \eqref{dtheta}, we use the fact that $n$ is sufficiently large and $d_t\T_t^2 = (1+\b_t)g(t\d) n \cdot \frac{\d^2}{(1-t\d)^2} \le \frac{2\d^2 n}{\h} = 2\d^{19/10} n$, where the inequality uses \Cref{clm:alpha beta} and $t<(1-\h)\t$. 

We now let $L = L_v + L'_v$ and, conditioning on $H, V$, we prove a concentration bound on $L$ using Talagrand's inequality. The random variable $L$ conditioned on $H, V$ can be seen as a function with domain $\Omega=\prod_{c \in \mathscr{C}} \Omega_c$ with $\Omega_c$ denoting the probability space for the decision made for color $c$. If the decision for one of the $\Omega_c$'s is altered, then $L$ will be affected by at most $5C$. Thus, $L$ is $5C$-Lipschitz. Furthermore, we claim that $L$ is $(3\log^2 n)$-certifiable. Indeed, the event $u\in V(T)$ is certified in \ref{step3'} by the activation of a color of some edge in $H$ incident to $u$. Moreover, the event that the color of the edge $uv$ belongs to $\mathscr{C}'$ in \ref{step5} is certified by the activation of the color of $uv$ in \ref{step3'} and the non-activation of the colors (whose number is less than $2\log^2 n$ due to \ref{F_1}) corresponding to the edges in $H$ incident to $e$ except itself, where $e$ is the edge in $H$ with the same color as $uv$. This establishes that $L$ is $(3\log^2 n)$-certifiable. Note that by definition of $L$, we have $\E(L \;| H, V) \le d_{G_t}(v) \le d_t \le 2n$. Hence, setting $\l = \sqrt{n} \log^4 n$ and using \Cref{talagrand}, we conclude that the probability that $L$, conditioned on $H$ and $V$, deviates from its mean by $\l > \frac{\l}{2} + 20 \cdot 5C \cdot \sqrt{3\log^2 n \cdot \E(L\;| H, V)} + 64\cdot (5C)^2\cdot 3\log^2 n$ is at most $4 \exp\left(-\frac{(\l/2)^2}{8\cdot (5C)^2 \cdot 3\log^2 n \cdot (\E(L \;| H, V) +\l/2)}\right) = \exp(-\omega(\log n))$. Thus, using the fact that $\l \le \d^{7/4} n$, we have 
\begin{equation} \label{L_good_event}
\P\left[L_v + L'_v \ge \E(L_v + L'_v\;| H, V) - \d^{7/4} n \;\big| H, V\right] = 1-\exp(-\omega(\log n)).
\end{equation}
Thus, conditioning on $H, V$ and assuming that the event in \eqref{L_good_event} holds, using \eqref{dtheta}, we have
\begin{align}
D_v &= d_{G_t}(v) - (X_v + L_v + L'_v) = \sum_{u\in \overline{V}} m(u,v) - (L_v + L'_v) \nonumber \\
&\le \sum_{u \in \overline{V}} m(u,v)\cdot \left(1 - \T_t - d_H(u)\T_t\right) + 2\d^{7/4} n \nonumber \\
&= \left(1 - \T_t\right) \sum_{u \in \overline{V}} m(u,v) - \T_t \sum_{u \in \overline{V}} m(u,v) d_H(u) + 2\d^{7/4} n \nonumber \\
&\le \left(1 - \T_t\right) \sum_{u\in V(G_t)} m(u,v) (1 - p_u) - \T_t \sum_{u\in V(G_t)} m(u,v)(1-p_u)\cdot \frac{d_{G_t}(u)}{s_t} + 4\d^{7/4} n \label{conc} \\
&= \sum_{u\in V(G_t)} m(u,v)(1-p_u) \left(1 - \T_t - \frac{d_{G_t}(u) \T_t}{s_t}\right) + 4\d^{7/4} n \nonumber \\
&\le \sum_{u\in V(G_t)} m(u,v)(1-p_u) \left(1 - \frac{d_{G_t}(u) \T_t}{s_t}\right) \left(1 - \T_t\right) + 4\d^{7/4} n \nonumber \\
&\le \sum_{u\in V(G_t)} m(u,v)(1-p_u) \left(1 - p'_u\right) \left(1 - \T_t\right) + 4\d^{7/4} n \label{ineq} \\
&= \sum_{u\in V(G_t)} m(u,v)(1-a_t) \left(1 - \T_t\right) + 4\d^{7/4} n 
\;\;\;\;\;\; \le (1-a_t) \left(1 - \T_t\right) d_t + 4\d^{7/4} n. \label{degree of v}
\end{align}
In the above, for \eqref{conc}, we use the facts that the choices of $H$ and $V$ satisfy \ref{F_3} and \ref{F_4}. In \eqref{ineq}, we use the inequality at \eqref{inequality_pv}. In \eqref{degree of v}, we use the definition of $p_u$ mentioned in \ref{step4} and also the fact that $\sum_{u\in V(G_t)} m(u,v) = d_{G_t}(v) \le d_t$.

Since the bounds in \eqref{L_good_event} and \eqref{degree of v} hold for any choices of $H$ and $V$ satisfying $\F$, we have the following. 
\begin{align*}
\P\left[D_v \le (1-a_t) \left(1 - \T_t\right) d_t + 4\d^{7/4} n \;\big| \F\right] = 1-\exp(-\omega(\log n)).
\end{align*}
This finishes the proof of \Cref{expect}. 
\end{proof}

Now, note that $g(t\d) \ge g((t+1)\d)$ and thus $\b_t\le \frac{t\d^{3/2}}{g((t+1)\d)}$. Thus, observe that
\begin{align*}
\left(1 - a_t\right)\left(1 - \T_t\right) d_t &= \left(1 - \frac{(1+\b_t)\cdot \g\d(1-\g t\d)}{(1-\a_t)\cdot (1-\g t\d)^2}\right) \left(1 - \frac{\d}{1-t\d}\right) (1+\b_t) (1-t\d) (1 - \g t\d) n \\
&\le \left(1 - \frac{\g\d(1-\g t\d)}{(1-\g t\d)^2}\right) \left(1 - \frac{\d}{1-t\d}\right) (1+\b_t) (1-t\d) (1 - \g t\d) n \\
&= (1+\b_t) g((t+1)\d) n \;\;\;\;\;\; 
\le g((t+1)\d) n + t\d^{3/2} n.
\end{align*}
Using the above inequality and the equality that $d_{t+1} = g((t+1)\d) n + (t+1)\d^{3/2} n$, we have
\[
d_{t+1} - (\left(1 - a_t\right)\left(1 - \T_t\right) d_t + 4\d^{7/4} n) \ge \d^{3/2} n - 4\d^{7/4} n > 0.
\]
This, together with \Cref{nice_events,expect}, finishes the proof of \Cref{lem3}. 
\end{proof}

Our goal is to now combine \Cref{lem2,lem3} to show that, with positive probability, \ref{p1} and \ref{p2} hold for $t+1$. Showing this using the union bound is impossible, since we do not assume any upper bound on the number of colors or vertices in the multigraph $G$. However, utilizing the local lemma, we show below that with positive probability, \Cref{lem2} holds for every color and \Cref{lem3} holds for every vertex simultaneously.

\begin{lemma} \label{locallemma}
The following happens simultaneously with positive probability. 
\begin{itemize}
\item \label{1stcondition} For every $c\in \mathscr{C}_{t+1}$, the number of edges in $G_{t+1}$ of color $c$ is $s_{t+1}$. 
\item \label{2ndcondition} For every $v\in V(G_{t+1})$, the degree of $v$ in $G_{t+1}$ is at most $d_{t+1}$. 
\end{itemize}
\end{lemma}

\begin{proof}
Let $\G$ be the graph with vertex set $\mathscr{C}_t$, where two colors are joined by an edge if a vertex in $G_t$ meets both colors. For each $c \in \mathscr{C}_t$, we define the `bad' event $\mathcal{B}_c$ to be the event that $c\in \mathscr{C}_{t+1}$ and at least one of the following happens.
\begin{itemize}
\item For the color $c$, the first assertion of \Cref{locallemma} does not hold. 
\item There exists $v\in V(G_{t+1})$ that meets the color $c$ such that $v$ violates the second assertion of \Cref{locallemma}.
\end{itemize}
It is straightforward to see that to prove \Cref{locallemma}, it suffices to show that with positive probability, none of the events $\mathcal{B}_c$ happens. Note that for each $c \in \mathscr{C}_t$, the event $\mathcal{B}_c$ is determined by the random choices involving the color $c'$ with $\dist_{\G}(c, c') \le 2$, where $\dist_{\G}(c, c')$ denotes the distance between $c$ and $c'$ in the graph $\G$, i.e., the number of edges in a shortest path between $c$ and $c'$. We claim that the graph $\G^4$, defined as the graph on $\mathscr{C}_t$ with edges between $c$ and $c'$ if and only if $\dist_{\G}(c, c') \le 4$, is a dependency graph for the events $\{\mathcal{B}_c\}_{c\in\mathscr{C}_t}$. This is because if $cc' \notin E(\G^4)$, then the events $\mathcal{B}_c$ and $\mathcal{B}_{c'}$ are determined by disjoint random choices. Observe that the maximum degree of $\G$ is at most $4n^2$, and consequently, the maximum degree of $\G^4$ is at most $256n^8$. By \Cref{lem2,lem3} and the union bound, for all $c \in \mathscr{C}_t$, we have $\P[\mathcal{B}_c] = \exp(-\omega(\log n))$. Thus, by the local lemma (i.e., \Cref{lll}), we conclude that with positive probability, none of the events $\mathcal{B}_c$ happens, as desired. This finishes the proof of \Cref{locallemma}.
\end{proof}

\Cref{locallemma} shows that \ref{p1} and \ref{p2} hold for $t+1$ with positive probability with respect to the objects generated by our randomized algorithm. Thus, there are deterministic choices for the objects in \ref{inductive property} so that it holds for $t+1$. This finishes our inductive argument to prove that \ref{inductive property} holds for every $t\in \{0\}\cup [(1-\h)\t]$.

\begin{proof}[{\bf Wrapping up the proof of \Cref{th1 small codegree}}]
We have shown above that \ref{inductive property} holds for $t=(1-\h)\t$. This means that there is a set $\mathscr{C}_{(1-\h)\t} \subseteq \mathscr{C}$ such that there is a $(\mathscr{C}\setminus \mathscr{C}_{(1-\h)\t})$-rainbow matching $M_{(1-\h)\t}$ of $G$, and there is a subgraph $G_{(1-\h)\t}$ of $G$ with $V(G_{(1-\h)\t})\cap V(M_{(1-\h)\t}) = \emptyset$ and $\mathscr{C}(G_{(1-\h)\t})\subseteq \mathscr{C}_{(1-\h)\t}$ such that \ref{p1} and \ref{p2} hold for $t=(1-\h)\t$. Using \Cref{clm:alpha beta}, the parameter specifications in \eqref{epdeleta}, and the fact that $n$ is sufficiently large relative to $\xi$, we have 
\[
\frac{s_{(1-\h)\t}}{d_{(1-\h)\t}} = \frac{(1-\a_{(1-\h)\t}) (1-\g(1-\h))^2 (1+\e)n}{(1+\b_{(1-\h)\t}) (1-\g(1-\h)) \h n} \ge \frac{(3/4)\cdot \e}{(5/4)\cdot \h} = \frac{3}{5\e} \ge 4e.
\]
Thus, we invoke \Cref{useAlon} to find a $\mathscr{C}_{(1-\h)\t}$-rainbow matching $M$ in the multigraph $G_{(1-\h)\t}$. Thus, the multigraph $G$ contains the $\mathscr{C}$-rainbow matching obtained by taking the union of $M$ and $M_{(1-\h)\t}$. This finishes the proof of \Cref{th1 small codegree}.
\end{proof}

%%%%%%%%%%%%%%%%%%%%%%%%%%%%%%%%%%%%%%%%%%%%%%%
\subsection{Reducing local degree} \label{subsec:reduction of degree}
In this subsection, we deduce \Cref{th1} from \Cref{th1 small codegree}. We utilize the following two lemmas that are inspired by similar results from \cite{A92,CT,LS}.

\begin{lemma} \label{lem:random partition}
There exist $n_0, \xi_0 > 0$ such that the following hold for every $n\ge n_0$ and $0< \xi \le \xi_0$. Suppose $G$ is an edge-colored multigraph with maximum degree at most $n$ such that there are at least $(1 + n^{-\xi})n$ edges of each color. Furthermore, suppose that the multiplicity of $G$ is at most $n^{1/3}$ and at most $n^{1/3}$ edges of the same color are incident to any vertex. Then there is a subgraph $G'$ of $G$ such that the following hold.
\stepcounter{propcounter}
\begin{enumerate}[leftmargin=*,label = {\bfseries \emph{\Alph{propcounter}\arabic{enumi}}}]
\item \label{eq:random part1} For every $c\in \mathscr{C}(G)$, there are at least $\left(1+3n^{-\xi}/4\right)n^{1/3}$ edges of color $c$ in $G'$.
\item \label{eq:random part2} The maximum degree of $G'$ is at most $\left(1+n^{-\xi}/4\right)n^{1/3}$.
\item \label{eq:random part3} The multiplicity of $G'$ is at most $5$.
\item \label{eq:random part4} At most $5$ edges of the same color are incident to any vertex in $G'$.
\end{enumerate}
\end{lemma}

\begin{proof}
Let $\xi$ be a sufficiently small positive constant and $n$ be a sufficiently large integer. Without loss of generality, we assume that the graph $G$ contains exactly $(1 + n^{-\xi})n$ edges of each color. Consider the subgraph $G'$ of $G$ obtained by retaining all edges of $G$ independently with probability $p = 1/n^{2/3}$. For every $c\in \mathscr{C}(G)$, let $G'_c$ denote the subgraph of $G'$ induced by the set of all edges of color $c$ in $G'$. We aim to apply the local lemma to show that, with positive probability, the properties in \ref{eq:random part1}--\ref{eq:random part4} hold simultaneously.

We define four bad events corresponding to each of \ref{eq:random part1}--\ref{eq:random part4}. For every $c\in \mathscr{C}(G)$, let $\mathcal{A}_c$ denote the event that $e(G'_c) < \left(1+3n^{-\xi}/4\right)n^{1/3}$. For every $v\in V(G)$, let $\mathcal{B}_v$ denote the event that $d_{G'}(v) > \left(1+n^{-\xi}/4\right)n^{1/3}$. For every pair of adjacent vertices $u,v\in V(G)$, let $\mathcal{C}_{u,v}$ denote the event that the multiplicity of $(u,v)$ in $G'$ is at least $6$. For every $v\in V(G)$ and $c\in \mathscr{C}(G)$ such that $v$ meets $c$, let $\mathcal{D}_{v,c}$ denote the event that $d_{G'_c}(v) \ge 6$. 

The event $\mathcal{A}_c$ is completely determined by the choices for edges of color $c$ in $G$, the event $\mathcal{B}_v$ is determined by the choices for the edges incident to $v$, the event $\mathcal{C}_{u,v}$ is determined by the choices for (multi-)edges $uv$, and the event $\mathcal{D}_{v,c}$ is determined by the choices for the edges of color $c$ incident to $v$. Thus, each of these events is mutually independent of all but at most $O(n)$ other events.

We now show that the probabilities of the above bad events are small. For every $c\in \mathscr{C}(G)$, since $e(G'_c)$ is distributed as a binomial random variable with mean $(1+n^{-\xi})np = (1+n^{-\xi})n^{1/3}$, it follows from Chernoff bound that the event $\mathcal{A}_c$ holds with probability at most $\exp\left(-\Omega(n^{-2\xi}n^{1/3})\right) \le n^{-2}$. Similarly, for every $v\in V(G)$, since $d_{G'}(v)$ is distributed as a binomial random variable with mean at most $np = n^{1/3}$, it follows from Chernoff bound that the event $\mathcal{B}_v$ holds with probability at most $\exp\left(-\Omega(n^{-2\xi}n^{1/3})\right) \le n^{-2}$. Finally, since the multiplicity of $G$ is at most $n^{1/3}$ and at most $n^{1/3}$ edges of the same color are incident to any vertex in $G$, both the probability that $\mathcal{C}_{u,v}$ holds and the probability that $\mathcal{D}_{v,c}$ holds are at most $\binom{n^{1/3}}{6} p^6 \le n^{-2}$. Thus, by the local lemma, none of the bad events occur with positive probability. Thus, there exists a subgraph $G'$ of $G$ satisfying \ref{eq:random part1}--\ref{eq:random part4}. 
\end{proof}

\begin{lemma} \label{lem:equal partition}
There exists $D_0$ such that the following holds for every $D\ge D_0$ and $d > \log^4 D$. Suppose $G$ is an edge-colored multigraph with maximum degree at most $D$ such that there are $2n$ edges of each color. Furthermore, suppose that the multiplicity of $G$ is at most $d$ and at most $d$ edges of the same color are incident to any vertex. Then there is a subgraph $G'$ of $G$ such that the following hold.
\stepcounter{propcounter}
\begin{enumerate}[leftmargin=*,label = {\bfseries \emph{\Alph{propcounter}\arabic{enumi}}}]
\item \label{eq:equal part1} For every $c\in \mathscr{C}(G)$, there are $n$ edges of color $c$ in $G'$.
\item \label{eq:equal part2} The maximum degree of $G'$ is at most $D' = D/2 + D^{2/3}$.
\item \label{eq:equal part3} The multiplicity of $G'$ is at most $d' = d/2 + d^{2/3}$.
\item \label{eq:equal part4} At most $d' = d/2 + d^{2/3}$ edges of the same color are incident to any vertex in $G'$.
\end{enumerate}
\end{lemma}

\begin{proof}
Let $D$ be a sufficiently large integer and $d > \log^4 D$. For every color $c\in \mathscr{C}(G)$, arbitrarily pair up the $2n$ edges of color $c$. For each such pair of edges, designate one of them randomly and independently of the others to the subgraph $G'$ of $G$. For every $c\in \mathscr{C}(G)$, let $G'_c$ denote the subgraph of $G'$ induced by the set of all edges of color $c$ in $G'$. By construction, the property \ref{eq:equal part1} is satisfied. We will now use the local lemma, like in the proof of the last lemma, to show that with positive probability the properties in \ref{eq:equal part2}--\ref{eq:equal part4} hold.

We define three bad events. For every $v\in V(G)$, let $\mathcal{B}_v$ denote the event that $d_{G'}(v) > D/2 + D^{2/3}$. For every pair of adjacent vertices $u,v\in V(G)$, let $\mathcal{C}_{u,v}$ denote the event that the multiplicity of $(u,v)$ in $G'$ is at least $d/2 + d^{2/3}$. For every $v\in V(G)$ and $c\in \mathscr{C}(G)$ such that $v$ meets $c$, let $\mathcal{D}_{v,c}$ denote the event that $d_{G'_c}(v) \ge d/2 + d^{2/3}$. By a same argument as in the proof of \Cref{lem:random partition}, one can show that each of the bad events $\mathcal{B}_v$, $\mathcal{C}_{u,v}$, and $\mathcal{D}_{v,c}$ is mutually independent of all but at most $O(D)$ other events. 

We now show that the probabilities of the above bad events are small. We first consider the event $\mathcal{B}_v$. Observe that if two edges incident to $v$ are paired up, then exactly one of them will lie in the graph $G'$. Let $E$ be the set of all edges incident to $v$ that are paired to edges not incident to $v$. Let $m$ denote the number of edges in $E$ that are designated to $G'$. Then, $d_{G'}(v) \le m + (D-|E|)/2$. The quantity $m$ is binomially distributed with parameters $|T|\le D$ and $1/2$. Thus, by Chernoff bound, the probability that $m > |E|/2 + D^{2/3}$ is at most $\exp\left(-\Omega\left((D^{2/3})^2/|T|\right)\right) \le D^{-2}$. Consequently, $\P[\mathcal{B}_v] \le D^{-2}$. Similarly, one can use the assumption that $d > \log^4 D$ to prove that $\P[\mathcal{C}_{u,v}] \le D^{-2}$ and $\P[\mathcal{D}_{v,c}] \le D^{-2}$. Thus, by the local lemma, none of the bad events occur with positive probability. This shows that there exists a subgraph $G'$ of $G$ satisfying \ref{eq:equal part1}--\ref{eq:equal part4}. 
\end{proof}

We now finally prove \Cref{th1} using \Cref{th1 small codegree} and \Cref{lem:random partition,lem:equal partition}.

\begin{proof}[{\bf Proof of \Cref{th1}}]
Let $\xi >0$ be sufficiently small and $\e_0 >0$ be sufficiently small relative to $\xi$. Let $0 < \e \le \e_0$ and $\d = \e^{\xi}$. Suppose $G$ is an edge-colored multigraph with maximum degree at most $n$ such that there are exactly $(1 + \d)n$ edges of each color. (If there are more edges of some color, then we arbitrarily delete such edges to ensure this.) Furthermore, suppose that the multiplicity of $G$ is at most $\e n$ and at most $\e n$ edges of the same color are incident to any vertex. Since the assertion of \Cref{th1} is vacuously true when $\e n < 1$, we assume that $n\ge 1/\e$ in the remainder of the proof. We now split into two cases.

\medskip
\noindent\textbf{Case 1:} Suppose $n\le \e^{-4/3}$. Then, since $\e n \le n^{1/4}$ and $\d \ge n^{-\xi}$, we use \Cref{lem:random partition} to find a subgraph $G'$ of $G$ satisfying \ref{eq:random part1}--\ref{eq:random part4}. Let $n' = \left(1+n^{-\xi}/4\right) n^{1/3}$. Using these hypotheses, we then apply \Cref{th1 small codegree} with $C=5$, $n = n'$, and $\xi = 4\xi$ to find a rainbow matching in $G'$ using every color. Indeed, this application is possible because 
\[
\left(1+3n^{-\xi}/4\right)n^{1/3} \ge \left(1+n^{-\xi}/4\right)^2 n^{1/3} = (1+(1/4)\cdot (n^{1/3})^{-3\xi}) n' \ge (1+ (n')^{-4\xi}) n',
\]
where the last step uses the fact that $n'$ is sufficiently large relative to $\xi$ (which follows from the facts that $n\ge 1/\e$ and $\e$ is sufficiently small relative to $\xi$). The rainbow matching we found in $G'$ is clearly also a rainbow matching in $G$ that uses every color, as desired.

\medskip
\noindent\textbf{Case 2:} Suppose $n > \e^{-4/3}$. Then, let $j$ be the positive integer such that $2^{j-1} < \e^{4/3} n \le 2^j$. For every $c\in \mathscr{C}(G)$, we then delete less than $2^j$ edges of color $c$ from $G$ to ensure that $2^j$ divides the number of edges of color $c$. Let $E_c$ denote the set of all remaining edges of color $c$. Then, $2^j$ divides $|E_c|$ and we have $|E_c| \ge (1 + \d)n - 2^j$ for every $c\in \mathscr{C}(G)$. Define $D_0 = n$ and $d_0 = \e n$, and for every $t\ge 0$, define 
\[
D_{t+1} = D_t/2 + D_t^{2/3} \;\;\;\;\;\; \text{and} \;\;\;\;\;\; d_{t+1} = d_t/2 + d_t^{2/3}
\]
We claim the following.
\stepcounter{propcounter}
\begin{enumerate}[leftmargin=*,label = {\bfseries \emph{\Alph{propcounter}\arabic{enumi}}}]
\item \label{eq:max degree} $\frac{1}{2\e^{4/3}} < D_j \le (1+\frac{\d}{2})(\frac{n}{2^j} -1)$,
\item \label{eq:color degree} $d_j \le (\frac{n}{2^j} -1)^{1/3}$, and
\item \label{eq:global vs color} $d_t > \log^4 D_t$ for $0\le t < j$.
\end{enumerate}
We first use these claims to complete the proof of \Cref{th1}. Let $n^* = |E_c|/2^j$ for $c\in \mathscr{C}(G)$. Note that $n^* \ge (1+\d)n/2^j - 1 \ge (1+\d)(n/2^j - 1)$. Using \ref{eq:global vs color} and the lower bound in \ref{eq:max degree}, we invoke \Cref{lem:equal partition} repeatedly $j$ times to get a graph $G'$ such that \ref{eq:equal part1}--\ref{eq:equal part4} hold with $n$, $D'$, and $d'$ replaced by $n^*$, $D_j$, and $d_j$. Let $n' = (1+\frac{\d}{2})(\frac{n}{2^j} -1)$. Since $n/2^j > 1/(2\e^{4/3})$ and $\e$ is sufficiently small relative to $\xi$, we have that $n'$ is sufficiently large relative to $\xi$, and $n'\ge n/2^{j+1}$. Using these and $\d < 1$ and also since \ref{eq:max degree}, \ref{eq:color degree} hold, we have $D_j \le n'$ and $d_j\le n'^{1/3}$ and $n^*\ge (1+\d)(n/2^j - 1) \ge (1+\d/3)n'$ and $\d/3 \ge (1/3)\cdot (2^{j-1}/n)^{3\xi/4} \ge (1/3)\cdot (4n')^{-3\xi/4} \ge (n')^{-\xi}$. We then apply \Cref{lem:random partition} on $G'$ with $n$ replaced by $n'$ to get a graph $G''$ satisfying \ref{eq:random part1}--\ref{eq:random part4} with $n$ replaced by $n'$. Similar to Case 1, we finally apply \Cref{th1 small codegree} to find a rainbow matching in $G''$ using every color. This yields a rainbow matching in $G$ using every color, as desired.

To finish the proof of \Cref{th1}, the only thing remaining is to prove \ref{eq:max degree}--\ref{eq:global vs color}. For the lower bound in \ref{eq:max degree}, observe that whenever $0\le t\le j$, we have
\begin{equation} \label{eq:lower bound for D}
D_t \ge \frac{D_0}{2^t} = \frac{n}{2^t} \ge \frac{n}{2^j} > \frac{1}{2\e^{4/3}}.
\end{equation}
For the upper bound in \ref{eq:max degree}, we first notice that for every $t\ge 0$,
\[
D_{t+1} = \frac{D_t}{2} + D_t^{2/3} \le \frac{(D_t^{1/3} +1)^3}{2}
\;\;\;\;\;\; \text{and so} \;\;\;\;\;\;
D_{t+1}^{1/3}\le \frac{D_t^{1/3}}{2^{1/3}} + \frac{1}{2^{1/3}}.
\]
Thus, 
\[
D_j^{1/3}\le \frac{D_0^{1/3}}{2^{j/3}} + \sum_{t\in [j]} \frac{1}{2^{t/3}}
\le \frac{n^{1/3}}{2^{j/3}} + 4
\le (1+\d/4)^{1/3} \frac{n^{1/3}}{2^{j/3}},
\]
where the last inequality uses $\frac{n}{2^j} > \frac{1}{2\e^{4/3}}$ and $\d = \e^{\xi}$ where both $\e$ and $\xi$ are sufficiently small. Using the same reasoning once more, the upper bound in \ref{eq:max degree} follows as below.
\[
D_j \le \left(1+\d/4\right) \frac{n}{2^j} \le \left(1+\d/2\right)\left(\frac{n}{2^j} -1\right)
\]
Using this, we have the following inequality for every $0\le t< j$, which will be used later.
\begin{equation} \label{eq: backward inequality for D}
D_t \le 2^{j-t} D_j \le 2^{j-t} (1+\d/2) \frac{n}{2^j} \le \frac{2n}{2^t}.
\end{equation}
For the proof of \ref{eq:color degree}, treating $d_t$ the same way as $D_t$, we obtain the following.
\[
d_j^{1/3}
\le \frac{(\e n)^{1/3}}{2^{j/3}} + 4
\le 2 \frac{(\e n)^{1/3}}{2^{j/3}},
\]
where the last inequality uses $\frac{\e n}{2^j} > \frac{1}{2\e^{1/3}}$ and that $\e$ is sufficiently small. Since $\e \le (2^j/n)^{3/4}$ and $n/2^j$ is sufficiently large, we have
\[
d_j \le 8 \frac{\e n}{2^j} \le 8 \left(\frac{n}{2^j}\right)^{1/4} \le \left(\frac{n}{2^j} -1\right)^{1/3}.
\]
Finally, for \ref{eq:global vs color}, observe that $\e \ge (2^{j-1}/n)^{3/4} \ge (2^t/n)^{3/4}$ whenever $0\le t < j$. Thus, using \eqref{eq: backward inequality for D}, we have
\[
d_t \ge \frac{\e n}{2^t} \ge \left(\frac{n}{2^t}\right)^{1/4} \ge \left(\frac{D_t}{2}\right)^{1/4} \ge \log^4 D_t,
\]
where the last inequality uses \eqref{eq:lower bound for D} and that $\e$ is sufficiently small. This finishes the verification of \ref{eq:max degree}--\ref{eq:global vs color} and thus also the proof of \Cref{th1}.
\end{proof}

%%%%%%%%%%%%%%%%%%%%%%%%%%%%%%%%%%%%%%%%%%%%%%%%%%%%%%%%%%%%%%%%%%%%%%%%%%%%%%%%%%%%%%%%%%%%%%%%
\section*{Acknowledgements}
The first author thanks Tuan Tran for discussions that led to an improvement of \Cref{th1} from a previous version of this paper. We also thank an anonymous referee for helpful comments that improved the exposition.

%%%%%%%%%%%%%%%%%%%%%%%%%%%%%%%%%%%%%%%%%%%%%%%%

\end{document}